	\tikzstyle{edge}=[line width=.75pt]
	\tikzstyle{fnode}=[fill=black,draw=black,circle,scale=\s]
	\tikzstyle{pathnode}=[inner sep=.9pt]
\newtheorem{theorem}{Theorem}[section]
\newtheorem{proposition}[theorem]{Proposition}
\newtheorem{lemma}[theorem]{Lemma}
\theoremstyle{remark}
\newtheorem{example}[theorem]{Example}
\newtheorem{remark}[theorem]{Remark}
\newtheorem{question}[theorem]{Question}
\newcommand{\defn}[1]{{\color{green!50!black}\emph{#1}}}
\newcommand{\defs}{\stackrel{\mathsf{def}}{=}}
\newcommand{\ie}{\text{i.e.}\;}
\newcommand{\Poset}{\mathbf{P}}
\newcommand{\Succ}{\mathsf{Succ}}
\newcommand{\TamariComplex}{\mathcal{T\!C}}
\newcommand{\Asso}{\mathsf{Asso}}
\newcommand{\Dyck}{\mathcal{D}}
\newcommand{\Trees}{\mathcal{T}}
\newcommand{\Cov}{\mathsf{Asso}}
\newcommand{\Tamari}{\mathsf{Tam}}
\newcommand{\ascent}{\mathsf{asc}}
\newcommand{\Ascent}{\mathsf{Asc}}
\newcommand{\valley}{\mathsf{val}}
\newcommand{\return}{\mathsf{ret}}
\newcommand{\horiz}{\mathsf{horiz}}
\newcommand{\hroot}{\mathsf{hroot}}
\renewcommand{\dim}{\mathsf{dim}}
\newcommand{\codim}{\mathsf{codim}}
\renewcommand{\max}{\mathsf{max}}
\renewcommand{\deg}{\mathsf{deg}}
\newcommand{\relevantascent}{\mathsf{rel}}
\newcommand{\Relevantascent}{\mathsf{Rel}}
\newcommand{\corelevant}{\mathsf{corel}}
\newcommand{\interior}{\mathsf{int}}
\newcommand{\Lattice}{\mathbf{L}}
\newcommand{\out}{\mathsf{out}}
\newcommand{\mrk}{\mathsf{mrk}}
\newcommand{\rem}{\mathsf{rem}}
\newcommand{\corem}{\mathsf{corem}}
\newcommand{\Covers}{\mathsf{Edge}}
\title{$F$- and $H$-Triangles for $\nu$-Associahedra}
\author{Cesar Ceballos}
\address{CC: TU Graz, Institut f\"ur Geometrie, Kopernikusgasse 24, 8010 Graz, Austria.}
\email{cesar.ceballos@tugraz.at}
\author{Henri M{\"u}hle}
\address{HM: TU Dresden, Institut f{\"u}r Algebra, Zellescher Weg 12--14, 01069 Dresden, Germany.}
\email{henri.muehle@tu-dresden.de}
\thanks{CC was supported by the Austrian Science Foundation FWF, grant P 33278.  HM has received funding from the European Research Council (Grant Agreement no. 681988, CSP-Infinity).}
\keywords{$\nu$-Tamari lattice, $\nu$-associahedron, $F$-triangle, $H$-triangle}
\subjclass[2010]{05E45, 52B05}
\begin{document}

\allowdisplaybreaks

\begin{abstract}
	For any northeast path $\nu$, we define two bivariate polynomials associated with the $\nu$-associahedron: the $F$- and the $H$-triangle.  We prove combinatorially that we can obtain one from the other by an invertible transformation of variables.  These polynomials generalize the classical $F$- and $H$-triangles of F.~Chapoton in type $A$.  Our proof is completely new and has the advantage of providing a combinatorial explanation of the relation between the $F$- and $H$-triangle.
\end{abstract}

\maketitle

\section{Introduction}
	\label{sec:introduction}
The $\nu$-Tamari lattice is an intriguing object in combinatorics which was originally motivated by enumerative problems in the study of higher trivariate diagonal harmonics. Nowadays, it has applications and connections to other areas, including polytope theory, subword complexes, Hopf algebras, multivariate diagonal harmonics, and parabolic Catalan combinatorics, as well as to the enumeration of various combinatorial objects such as certain lattice walks in the quarter plane, non-crossing tree-like tableaux, and non-separable planar maps, see~\cites{ceballos_hopf_2018,ceballos20the,ceballos19geometry,ceballos20nu,preville17enumeration} and the references therein.   
The $\nu$-Tamari lattice depends on a fixed northeast path $\nu$, and was defined in~\cite{preville17enumeration} as a certain rotation order on the set of \defn{$\nu$-paths}, \ie northeast paths weakly above $\nu$. Alternatively, it can be described in terms of certain binary trees, called \defn{$\nu$-trees}~\cite{ceballos20nu}.

\begin{figure}[h]
	\centering
	\includegraphics[width=.61\textwidth,page=11]{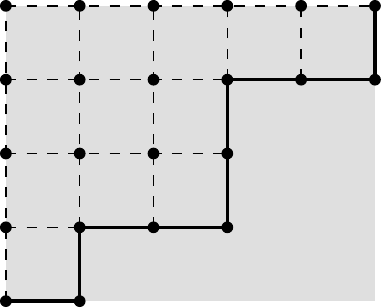}
	\caption{The $\nu$-associahedron for $\nu=ENEENEN$.}
	\label{fig:eneenen_associahedron}
\end{figure}

Motivated by an open problem of F.~Bergeron about the geometry of $m$-Tamari lattices, it was shown in \cite{ceballos19geometry} that the \defn{$\nu$-Tamari lattice} $\Tamari(\nu)$ has a nice underlying geometric structure. They proved that its Hasse diagram can be obtained as the edge graph of a polytopal complex called the \defn{$\nu$-associahedron} $\Asso(\nu)$; see Figure~\ref{fig:eneenen_associahedron}. This complex is dual to a certain triangulation of a particular polytope, which they used to exhibit explicit geometric realizations of the $\nu$-associahedron using techniques from tropical geometry.  The simplicial complex of faces of this triangulation is the \defn{$\nu$-Tamari complex} $\TamariComplex({\nu})$. 

If $\nu=(NE)^{n}$ is the staircase path with $2n$ steps, the corresponding three objects from the previous paragraph are the Tamari lattice~\cite{TamariFestschrift}, the associahedron~\cite{CeballosSantosZiegler2015} and the cluster complex in linear type~$A$ from the theory of cluster algebras~\cites{FominZelevinsky-ClusterAlgebrasII,FZ03}, respectively. The $(NE)^{n}$-paths are better known under the name \defn{Dyck paths}, and we will simply write $\Tamari(n)$, $\Asso(n)$, $\TamariComplex({n})$ rather than $\Tamari\bigl((NE)^{n}\bigr)$, $\Asso\bigl((NE)^{n}\bigr)$, $\TamariComplex\bigl({(NE)^{n}}\bigr)$.  

F.~Chapoton has observed a remarkable enumerative connection between the cluster complex $\TamariComplex({n})$ and the set of Dyck paths~\cites{chapoton06sur}.  More precisely, he defined the following two polynomials:
\begin{itemize}
	\item the \defn{$F$-triangle} $F_{n}(x,y)$ is the bivariate generating function of the faces of $\TamariComplex({n})$, where the variable $x$ accounts for so-called positive roots per face and $y$ accounts for so-called negative simple roots;
	\item the \defn{$H$-triangle} $H_{n}(x,y)$ is the bivariate generating function of Dyck paths, where the variable $x$ accounts for the valleys per path and $y$ accounts for the returns.\footnote{F.~Chapoton introduced the $H$-triangle in the context of his study of the cohomology of the toric variety associated with a fan arising from finite type cluster algebras~\cite{chapoton_personalCommunication_2021}.}
\end{itemize}
He then conjectured that these polynomials are related by the following invertible transformation:
\begin{equation}\label{eq:hf_transformation}
	F_{n}(x,y) = x^{n-1}H_{n}\left(\frac{x+1}{x},\frac{y+1}{x+1}\right).
\end{equation}
This conjecture was generalized for Fu{\ss}--Catalan families by D.~Armstrong~\cite{armstrong09generalized}, and was proven in this general setting by M.~Thiel~\cite{thiel14on}*{Theorem~2}.  Thiel's proof makes clever use of a combinatorial bijection on so-called $k$-generalized nonnesting partitions which leads to a differential equation involving the $H$-triangle.  Using a differential equation by C.~Krattenthaler involving the $F$-triangle, he then proves~\eqref{eq:hf_transformation} by induction.

Unfortunately, the combinatorial nature of the relation between the $F$- and the $H$-triangle is obscured in Thiel's proof.  The main result of the present article is a combinatorial proof of a generalization of \eqref{eq:hf_transformation} to $\nu$-paths and the $\nu$-associahedron.  

Given any northeast path $\nu$, we denote by $\deg(\nu)$ the maximal number of valleys that a northeast path weakly above $\nu$ can have. {In other words, $\deg(\nu)$ describes the size of the largest staircase shape that fits above $\nu$ in the rectangle enclosing~$\nu$.}  The $H$-triangle associated with $\nu$ is simply the bivariate generating function of $\nu$-paths, denoted by $H_{\nu}(x,y)$, where the variable $x$ accounts for valleys and the variable $y$ accounts for returns.  The $F$-triangle is the bivariate generating function $F_{\nu}(x,y)$ of the faces of $\Asso(\nu)$, where $x$ and $y$ account for a new pair of statistics that we introduce in this paper. Our main result shows that these polynomials satisfy \eqref{eq:hf_transformation}.

\begin{theorem}\label{thm:fh_correspondence}
	For every northeast path $\nu$, the following holds:
	\begin{equation}\label{eq:h_to_f}
		F_{\nu}(x,y) = x^{\deg(\nu)}H_{\nu}\left(\frac{x+1}{x},\frac{y+1}{x+1}\right).
	\end{equation}
	Equivalently,
	\begin{equation}\label{eq:f_to_h}
		H_{\nu}(x,y) = (x-1)^{\deg(\nu)}F_{\nu}\left(\frac{1}{x-1},\frac{x(y-1)+1}{x-1}\right).
	\end{equation}
\end{theorem}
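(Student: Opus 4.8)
The plan is to prove only \eqref{eq:h_to_f}, since \eqref{eq:f_to_h} follows from it by inverting the change of variables. Writing $X=\frac{x+1}{x}$ and $Y=\frac{y+1}{x+1}$, I would solve $x=\frac{1}{X-1}$, $x+1=\frac{X}{X-1}$ and $y=\frac{X(Y-1)+1}{X-1}$; substituting these into \eqref{eq:h_to_f} and using $x^{\deg(\nu)}=(X-1)^{-\deg(\nu)}$ turns it into \eqref{eq:f_to_h} after renaming $X,Y$ back to $x,y$. This step is purely formal, so I would record the equivalence once and then concentrate entirely on \eqref{eq:h_to_f}.

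Next I would make the right-hand side of \eqref{eq:h_to_f} combinatorially transparent. Since every return of a $\nu$-path is in particular a valley and no $\nu$-path has more than $\deg(\nu)$ valleys, substituting into $H_{\nu}(X,Y)=\sum_{\mu}X^{\valley(\mu)}Y^{\return(\mu)}$ gives
\[
x^{\deg(\nu)}H_{\nu}\!\left(\frac{x+1}{x},\frac{y+1}{x+1}\right)=\sum_{\mu}x^{\deg(\nu)-\valley(\mu)}(x+1)^{\valley(\mu)-\return(\mu)}(y+1)^{\return(\mu)},
\]
the sum running over all $\nu$-paths $\mu$, with all exponents genuinely nonnegative. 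Expanding the two binomials, a typical monomial is indexed by a triple $(\mu,S,T)$ in which $S$ is a subset of the non-return valleys of $\mu$ and $T$ a subset of its returns, and it equals $x^{\deg(\nu)-\valley(\mu)+|S|}\,y^{|T|}$. Thus \eqref{eq:h_to_f} reduces to exhibiting a bijection between the faces of $\Asso(\nu)$ and such triples, under which the two statistics defining $F_{\nu}$ become $\deg(\nu)-\valley(\mu)+|S|$ and $|T|$.

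To build the bijection I would attach to each face $\phi$ a canonical vertex, say the Tamari-greatest one: recall that the $1$-skeleton of $\Asso(\nu)$ is the Hasse diagram of $\Tamari(\nu)$, and I would use that each face of $\Asso(\nu)$ is again a (product of) $\nu'$-associahedra, so its vertex set has a unique Tamari-greatest element $\mu$. The edges of $\phi$ emanating downward from $\mu$ are rotations at valleys of $\mu$; I expect exactly $\valley(\mu)$ such directions, each labelled a return or a non-return according to the valley being rotated, and I would read off $T$ and $S$ as the return- and non-return-directions actually spanned by $\phi$. The baseline exponent $\deg(\nu)-\valley(\mu)$ is then the intrinsic $x$-statistic of the vertex $\mu$ itself, recording how many valleys $\mu$ lacks relative to the largest staircase above $\nu$; in particular the $0$-dimensional faces (the terms with $S=T=\emptyset$) contribute exactly $\sum_{\mu}x^{\deg(\nu)-\valley(\mu)}$.

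The hard part will be verifying that this assignment is a bijection and that the $F_{\nu}$-statistics match, and two points look most delicate. First, I must show that the downward rotations at $\mu$ are in exact correspondence with the valleys of $\mu$ and that every subset of them is spanned by a genuine face of $\Asso(\nu)$ with top vertex $\mu$; this is where the product structure of faces and the explicit rotation rule for $\nu$-paths have to be combined carefully. Second, and this is the real combinatorial content of Chapoton's transformation, I must prove that the return/non-return dichotomy on the valleys of $\mu$ coincides with the dichotomy that the two $F_{\nu}$-statistics impose on the generators of $\phi$, so that returns feed the variable $y$ while the remaining valleys together with the forced baseline feed $x$. Once both points are settled, summing the matched monomials over all faces yields \eqref{eq:h_to_f} directly, and with it the promised combinatorial explanation of the relation between the two triangles.
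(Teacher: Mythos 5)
Your reduction of \eqref{eq:h_to_f} to the identity of Proposition~\ref{prop_Ftriangle_paths} --- expanding the right-hand side into monomials indexed by triples $(\mu,S,T)$ --- is exactly the paper's starting point, and grouping faces of $\Asso(\nu)$ by a canonical vertex is the right kind of idea. But the pivot of your bijection is wrong: you attach to each face its Tamari-\emph{greatest} vertex $\mu$ and claim that the edges of the face going \emph{down} from $\mu$ are rotations at the valleys of $\mu$. In the $\nu$-Tamari order, rotating a path at a valley produces a cover going \emph{up}: if $\mu'$ is the rotation of $\mu$ at a valley, then $\mu\lessdot_{\nu}\mu'$. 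So the valleys of $\mu$ index the edges leaving $\mu$ upward, while the downward edges at $\mu$ correspond to valleys of the \emph{lower} endpoints; their number is in general unrelated to $\valley(\mu)$. The failure is not cosmetic. Take $\nu=ENEN$, so that $\Asso(\nu)$ is a pentagon. Its Tamari-greatest vertex is the path $NNEE$, which has $\valley(NNEE)=0$, yet it is the greatest vertex of four faces (itself, its two incident edges, and the pentagon); your scheme would allot it only $2^{0}=1$ face, so the monomials cannot be matched. Grouping by the greatest vertex simply does not stratify the faces compatibly with the statistics.

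The correct grouping --- and the one the paper uses --- is by the Tamari-\emph{least} vertex of each face (the ``bottom $\nu$-tree''). Then the up-edges at that vertex are exactly the rotations at its valleys (ascents, in tree language), and the two facts you yourself flag as delicate become provable: (i) the faces with a prescribed bottom vertex $T$ are in bijection with \emph{all} subsets $A\subseteq\Ascent(T)$, with $\dim(T\setminus A)=\lvert A\rvert$ --- this is Lemma~\ref{lem_asso_characterization_faces}, imported from prior work, and it is genuinely nontrivial since, as the pentagon example shows, nothing analogous holds at the top vertex; and (ii) the return/non-return dichotomy on valleys corresponds to the relevant/non-relevant dichotomy on ascents, which the paper establishes via the right-flushing bijection (Lemma~\ref{lem_changingstatistics}) and then exploits face-by-face in Proposition~\ref{prop_FtriangleT}. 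If you replace ``greatest'' by ``least'' and ``downward'' by ``upward'', and then supply (i) and (ii), your outline becomes the paper's proof; as written, the central step of your bijection would fail.
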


Our proof of Theorem~\ref{thm:fh_correspondence} is completely combinatorial. It relies on the geometry of the $\nu$-associahedron and exploits a bijection of \cite{ceballos20nu} which sends $\nu$-paths to $\nu$-trees.  

If $\nu=N^{a_{1}}E^{a_{1}}N^{a_{2}}E^{a_{2}}\cdots N^{a_{r}}E^{a_{r}}$ {for positive integers $a_{1},a_{2},\ldots,a_{r}$}, then Theorem~\ref{thm:fh_correspondence} sheds quite some light on the constructions from \cite{muehle21noncrossing}*{Section~5} and \cite{muehle19ballot}*{Section~5}.  If moreover $a_{2}=a_{3}=\cdots=a_{r}=1$, then our $F$-triangle combinatorially realizes the {case $m=1$} of the $F$-triangle computed {abstractly} in \cite{krattenthaler19the}*{Theorem~4.3}.
We wish to remark that analogues of $F$- and $H$-triangles arising in different (geometric) contexts but satisfying \eqref{eq:hf_transformation}, too, were for instance considered in \cites{garver20chapoton,muehle20hochschild}.

This article is organized as follows.  In Section~\ref{sec:basics}, we recall the basic definitions surrounding the $\nu$-Tamari lattice, such as $\nu$-paths, $\nu$-trees, rotation, the $\nu$-Tamari lattice, the $\nu$-associahedron and the $\nu$-Tamari complex.  In Section~\ref{sec:nu_triangles}, we introduce two bivariate polynomials arising naturally in the context of $\nu$-Tamari lattices and $\nu$-associahedra and we realize them in terms of $\nu$-trees and certain statistics.  We prove our main result (Theorem~\ref{thm:fh_correspondence}) in Section~\ref{sec:main_proof}, and present a generalization to arbitrary posets as Theorem~\ref{thm:fh_poset_correspondence} in Section~\ref{sec:fh_correspondence_posets}.  We conclude this article with a reciprocity result for the $\nu$-Tamari complex in Section~\ref{sec:nu_tamari_reciprocity}, which is the foundation for a generalization that we present in an upcoming note.

\section{Basics}
	\label{sec:basics}
\subsection{Northeast paths}
A \defn{northeast path} is a lattice path in $\mathbb{N}^{2}$ starting at the origin, and consisting of finitely many steps of the form $(0,1)$ (\defn{north steps}) and $(1,0)$ (\defn{east steps}).  We write such a path as a word over the alphabet $\{N,E\}$, where each $N$ represents a north step and each $E$ an east step.
Throughout this paper, we let $\nu$ denote (a fixed) such northeast path.  Let $F_{\nu}$ denote the Ferrers diagram that lies weakly above $\nu$ in the smallest rectangle containing~$\nu$.  Let~$A_{\nu}$ denote the set of lattice points inside $F_{\nu}$.  See Figure~\ref{fig:nu_ferrers} for an illustration.

\begin{figure}
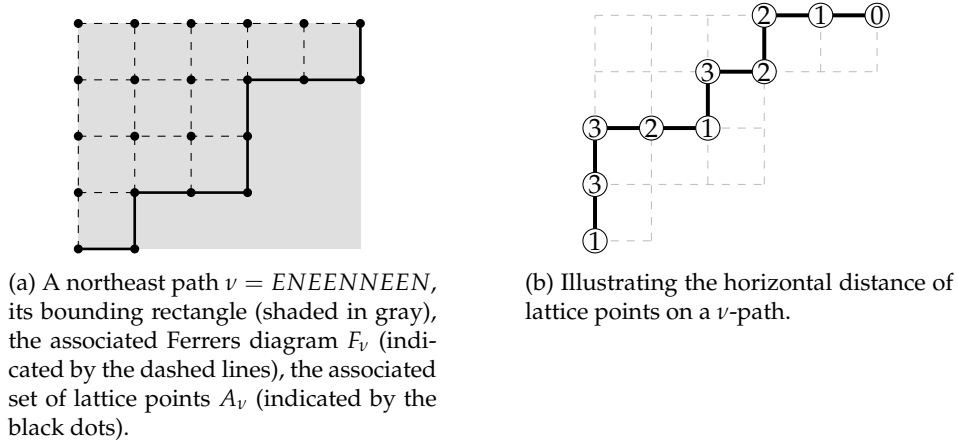

	\centering
	\begin{subfigure}[t]{.45\textwidth}
		\centering
		\includegraphics[scale=1,page=1]{3d_associahedron.pdf}
		\caption{A northeast path $\nu=ENEENNEEN$, its bounding rectangle (shaded in gray), the associated Ferrers diagram~$F_{\nu}$ (indicated by the dashed lines), the associated set of lattice points~$A_{\nu}$ (indicated by the black dots).}
		\label{fig:nu_ferrers}
	\end{subfigure}
	\hspace*{1cm}
	\begin{subfigure}[t]{.45\textwidth}
		\centering
		\includegraphics[page=19,scale=1]{3d_associahedron.pdf}
		\caption{Illustrating the horizontal distance of lattice points on a $\nu$-path.}
		\label{fig:horizontal_distance}
	\end{subfigure}
	\caption{Illustrating some basic definitions for $\nu$-paths.}
\end{figure}

\subsection{The $\nu$-Tamari lattice}
	\label{sec_TamariLattice_paths}
Let us denote by $\Dyck_{\nu}$ the set of all \defn{$\nu$-paths}, \ie northeast paths that live entirely inside $F_{\nu}$ sharing start and end points with~$\nu$ and lie weakly above $\nu$.  For $\mu\in\Dyck_{\nu}$, a \defn{valley} is a point $p\in A_{\nu}$ which lies on $\mu$ and is preceded by an east step and followed by a north step.  We denote by $\valley(\mu)$ the number of valleys of $\mu$.  A valley $p$ of $\mu$ is a \defn{return}, if $p$ is also a valley of $\nu$.  We denote by $\return(\mu)$ the number of returns of $\mu$.
The \defn{degree} of $\nu$ is defined as the maximum number of valleys that a $\nu$-path can have: 
\begin{displaymath}
	\deg(\nu) \defs \max\bigl\{\valley(\mu)\mid\mu\in\Dyck_{\nu}\bigr\}.
\end{displaymath}

If $p\in A_{\nu}$, then we denote by $\horiz_{\nu}(p)$ the \defn{horizontal distance} of $p$ to the right boundary of $F_{\nu}$, \ie the maximal number of east steps that we can append to $p$ without leaving $F_{\nu}$.  In other words, if $p=(i,j)$, then we look for the rightmost point in row $j$ that lies in $F_{\nu}$; say that this point is $(k,j)$.  Then
\begin{displaymath}
	\horiz_{\nu}(p)\defs k-i.
\end{displaymath}
Figure~\ref{fig:horizontal_distance} shows an element of $\Dyck_{ENEENNEEN}$, where each lattice point is labeled by its horizontal distance.

If $p$ is a valley of $\mu\in\Dyck_{\nu}$, then let $p'$ denote the first lattice point on $\mu$ after $p$ with $\horiz_{\nu}(p')=\horiz_{\nu}(p)$.  Let $\mu[p,p']$ denote the subpath of $\mu$ which lies between $p$ and $p'$.  The \defn{rotation} of $\mu$ by $p$ is the unique northeast path which arises from $\mu$ by swapping the east step before $p$ with $\mu[p,p']$.  If $\mu'$ is the path arising from $\mu$ in this manner, then we write $\mu\lessdot_{\nu}\mu'$.  It is quickly verified that $\lessdot_{\nu}$ is an acyclic binary relation on $\Dyck_{\nu}$, and we denote its reflexive and transitive closure by $\leq_{\nu}$.  See Figure~\ref{fig:nu_rotation} for an illustration.

\begin{figure}
	\centering
	\includegraphics[scale=1,page=2]{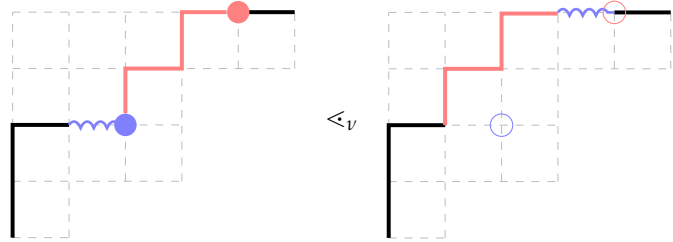}
	\caption{An example of a rotation of a $\nu$-path by the valley marked in blue.}
	\label{fig:nu_rotation}
\end{figure}

The partially ordered set $\Tamari(\nu)\defs(\Dyck_{\nu},\leq_{\nu})$ is a lattice; the \defn{$\nu$-Tamari lattice}; see~\cite{preville17enumeration}*{Theorem~1.1}.  Figure~\ref{fig:eenen_tamari_paths_fhlabel} shows the $\nu$-Tamari lattice for the path $\nu=EENEN$, which has degree $2$. 

\begin{figure}
	\centering
	\includegraphics[scale=1,page=3]{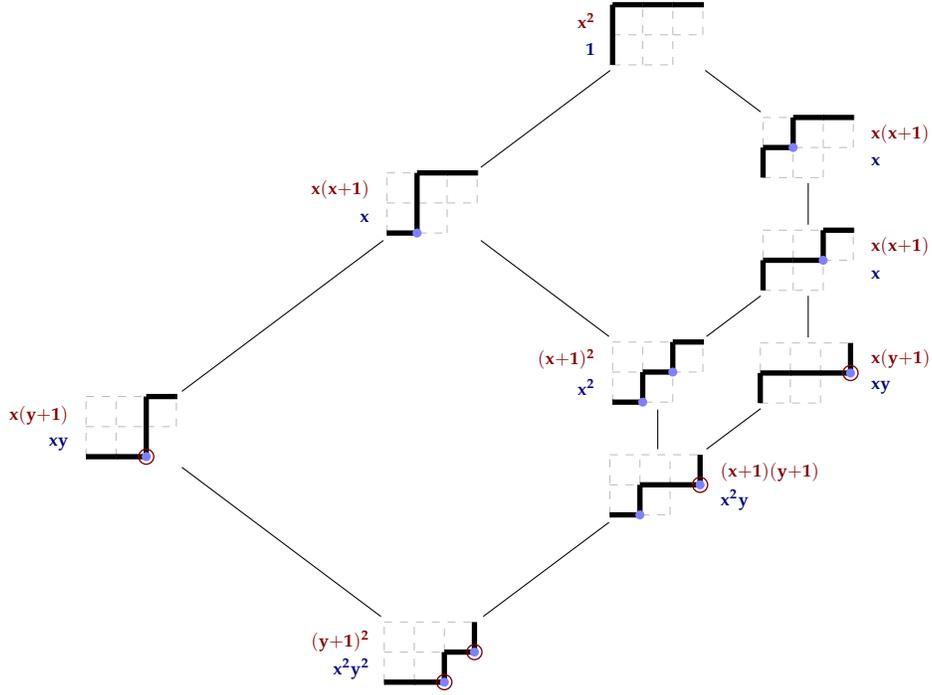}
	\caption{The $\nu$-Tamari lattice labeled by $\nu$-paths for $\nu=EENEN$.  Each path is additionally labeled by the term it contributes to $F_{\nu}(x,y)$ (top expression in red) and to $H_{\nu}(x,y)$ (bottom expression in blue).}
	\label{fig:eenen_tamari_paths_fhlabel}
\end{figure}

\subsection{The $\nu$-Tamari lattice via trees}
	\label{sec:nu_tamari_trees}
As shown in~\cite{ceballos20nu}, we can alternatively define the $\nu$-Tamari lattice in terms of a special family of trees. 

We say that two points $p,q\in A_{\nu}$ are \defn{$\nu$-incompatible} if $p$ is strictly southwest or strictly northeast of $q$ and the smallest rectangle containing $p$ and $q$ lies entirely in~$F_{\nu}$.  Otherwise, $p$ and $q$ are \defn{$\nu$-compatible}; we write $p\sim_{\nu}q$ in this case, and drop the reference to the path if no confusion may arise. 
A \defn{$\nu$-tree} is a maximal collection of pairwise $\nu$-compatible elements of $A_{\nu}$. 
We denote by $\Trees_\nu$ the set of all $\nu$-trees. 

If $T$ is a $\nu$-tree, we can connect two distinct elements $p,q\in T$ if $p$ and $q$ either lie in the same row or in the same column, and there is no element of $T$ on the line segment connecting $p$ and $q$. In particular, this allows us to visualize $\nu$-trees as classical rooted binary trees \cite{ceballos20nu}*{Lemma~2.4}.  An example is shown in Figure~\ref{fig:nu_tree}.

\begin{figure}[h]
	\begin{subfigure}[t]{.45\textwidth}
		\centering
		\includegraphics[scale=1,page=4]{3d_associahedron.pdf}
		\caption{A $\nu$-tree for $\nu=NE^{3}N^{2}E^{5}N^{3}E$.}
		\label{fig:nu_tree_ne3n2e5n3e}
	\end{subfigure}
	\hspace*{1cm}
	\begin{subfigure}[t]{.45\textwidth}
		\centering
		\includegraphics[scale=1,page=5]{3d_associahedron.pdf}
		\caption{The representation of the tree from Figure~\ref{fig:nu_tree_ne3n2e5n3e} as an ordinary binary tree.}
		\label{fig:nu_tree_ne3n2e5n3e_binary}
	\end{subfigure}
	\caption{A $\nu$-tree and its visualization as a classical rooted binary tree.}
	\label{fig:nu_tree}
\end{figure}

Let $T\in\Trees_{\nu}$ and let $p,q\in T$ be two elements which do not lie in the same row or same column.  Let $p\square r$ denote the smallest rectangle containing $p$ and $r$.  We write $p\llcorner r$ (resp. $p\urcorner r$) for the lower left corner (resp. upper right corner) of $p\square r$.  

An element $q\in T$ is an \defn{ascent} of $T$ if $q=p\llcorner r$ for some elements $p,r\in T$. In such a case, we choose $p,r$ canonically so that no other elements besides $q,p,r$ lie in $p\square r$.  We denote the set of ascents of $T$ by $\Ascent(T)$, and write $\ascent(T)\defs\bigl\lvert\Ascent(T)\bigr\rvert$.  

The \defn{rotation} of $T$ by the ascent $q$ is $T'=\bigl(T\setminus\{q\})\cup\{q'\}$, where $q'=p\urcorner r$.  Figure~\ref{fig_treerotation} illustrates this rotation operation.  As proven in~\cite{ceballos20nu}*{Lemma~2.10}, the rotation of a $\nu$-tree is also a $\nu$-tree.  By abuse of notation, we write $T\lessdot_{\nu} T'$ if $T'$ is a rotation of $T$, and denote by $\leq_{\nu}$ the reflexive and transitive closure of $\lessdot_{\nu}$.  The partial order $(\Trees_{\nu},\leq_{\nu})$ is a lattice, {which is} isomorphic to the $\nu$-Tamari lattice~\cite{ceballos20nu}*{Theorem~3.3}.

\begin{figure}
	\centering
	\includegraphics[scale=1,page=6]{3d_associahedron.pdf}
	\caption{The rotation operation of a $\nu$-tree by the ascent node $q$.  The rectangle $p\square r$ is highlighted.  See also Figure~\ref{fig:nu_rotation}.}
	\label{fig_treerotation}
\end{figure}     

\subsection{The right flushing bijection}\label{sec_rightflushingbijection}
The isomorphism between the $\nu$-Tamari lattice and the rotation lattice of $\nu$-trees is given by a simple bijection between the set of $\nu$-paths and the set of $\nu$-trees which we now recall. Given a $\nu$-path $\mu$, let $a_i$ be the number of lattice points on $\mu$ at height $i$, for $i\geq 0$.  There exists exactly one $\nu$-tree $T$ containing $a_i$ nodes at height $i$ for each $i\geq 0$. Vice-versa, given a $\nu$-tree with ``height sequence'' $a_0,a_1,a_2,\dots$, there is a unique $\nu$-path with the same height sequence.
We denote by $\Phi\colon\Dyck_\nu \to \Trees_\nu$ the map that sends $\mu$ to~$T$. This map is a bijection between the set of $\nu$-paths and the set of $\nu$-trees.   Moreover, it is an isomorphism between the $\nu$-Tamari lattice and the rotation lattice of $\nu$-trees~\cite{ceballos20nu}*{Proposition~16}. The map $\Phi$ is called the \defn{right flushing bijection}~\cite{ceballos20nu}, and is illustrated in Figure~\ref{fig:right_flushing}.

The reason why this is called ``right flushing'' is because it can be described as follows. 
Let $\mu$ be a $\nu$-path with height sequence $a_0,a_1,a_2,\dots$. We build the $\nu$-tree $T=\Phi(\mu)$ with the same height sequence by recursively adding $a_i$ nodes at height $i$ from bottom to top, from right to left, avoiding forbidden positions. The forbidden positions are those above a node that is not the left most node in a row (these come from the initial points of the east steps in the path $\mu$). In Figure~\ref{fig:right_flushing}, the forbidden positions are the ones that belong to the wiggly lines.  Note that the order of the nodes per row is reversed.

\begin{figure}
	\centering
	\includegraphics[scale=.85,page=7]{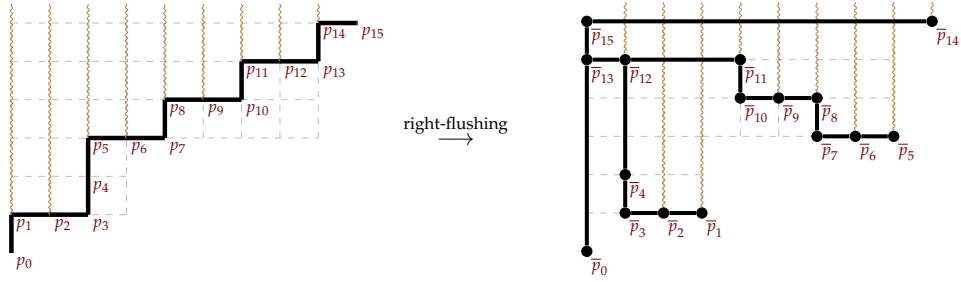}
	\caption{Illustrating the bijection from $\nu$-paths to $\nu$-trees.}
	\label{fig:right_flushing}
\end{figure}

\subsection{The $\nu$-Tamari complex and the $\nu$-associahedron}
Generalizing the $\nu$-trees mentioned above, we define a \defn{$\nu$-face} as a collection of pairwise $\nu$-compatible elements of $A_{\nu}$ (not necessarily maximal as in the case of $\nu$-trees).  The collection of $\nu$-faces forms a simplicial complex, which we call the \defn{$\nu$-Tamari complex} and denote by $\TamariComplex({\nu})$. 
This complex was originally defined using a different language in~\cite{ceballos19geometry}, and we use the terminology introduced in~\cite{ceballos20nu}.  

The $\nu$-Tamari complex is the simplicial complex of faces of a triangulation of a polytope studied in~\cite{ceballos19geometry}. The dual of this triangulation is a polytopal complex called the {$\nu$-associahedron} $\Asso(\nu)$, whose faces are in correspondence (via duality) with the interior faces of the triangulation. Such interior faces were classified in~\cite{ceballos19geometry} as \defn{covering $\nu$-faces}, which are defined as those $\nu$-faces containing the top-left corner of~$A_{\nu}$ and at least one point in each row and column in $F_{\nu}$~\cites{ceballos19geometry,ceballos20nu}. 
Covering $\nu$-faces are also called $\nu$-Schr{\"o}der trees in~\cite{bell20schroder}. 
We keep the name ``covering $\nu$-faces" because it appeared first in~\cite{ceballos19sweakorder}*{Definition~5.3}, following the conventions in~\cite{ceballos20nu}. This name is based on the terminology ``covering $(I,J)$-forests" used on the original definition of the $\nu$-associahedron in~\cite{ceballos19geometry}.

The \defn{$\nu$-associahedron} is defined as the polytopal complex whose cells are covering $\nu$-faces ordered by reversed inclusion.
\begin{displaymath}
	\Asso(\nu) \defs \{C\mid \text{$C$ is a covering $\nu$-face}\}.
\end{displaymath}
If $\nu$ is a northeast path from $(0,0)$ to~$(m,n)$, the \defn{dimension} of a covering~$\nu$-face $C$ is:
\begin{displaymath}
	\dim(C) \defs m+n+1 - \lvert C\rvert.
\end{displaymath}
In particular, one can check that every $\nu$-tree has $m+n+1$ elements. So, the $\nu$-trees correspond to the zero-dimensional faces (vertices) of the $\nu$-associahedron. Every time we remove a node (when possible), we increase the dimension of the resulting face by one. 
One can also see, for instance from Lemma~\ref{lem_asso_characterization_faces} and Lemma~\ref{lem_changingstatistics}~\eqref{lem_changingstatistics_one} further below, that the maximal dimension of a face in $\Asso(\nu)$ is equal to the maximal number of valleys that a $\nu$-path can have. Therefore,
\begin{displaymath}
	\dim\bigl(\Asso(\nu)\bigr) = \deg(\nu).
\end{displaymath}
An example of the $\nu$-associahedron for $\nu=EENEN$ is illustrated in Figure~\ref{fig:eenen_face}. The faces of this figure are labeled by covering $\nu$-faces and the vertices by $\nu$-trees. Its edge graph coincides with the Hasse diagram of the $\nu$-Tamari lattice in Figure~\ref{fig:eenen_tamari_paths_fhlabel}.
The advantage of working with the $\nu$-associahedron is that it captures the full geometric information behind the $\nu$-Tamari lattice. 

\begin{figure}
	\centering
	\includegraphics[scale=1,page=8]{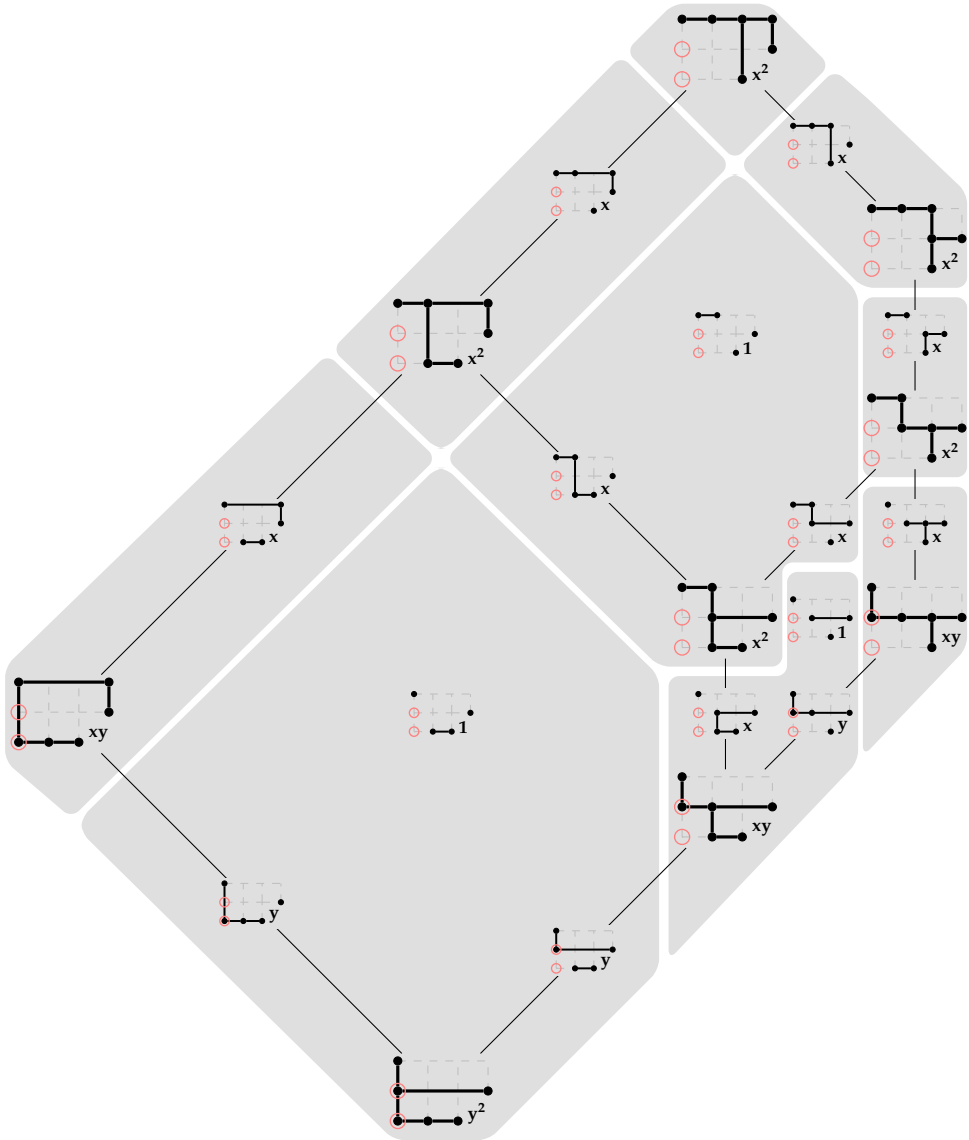}
	\caption{The $\nu$-associahedron $\Asso(\nu)$ for $\nu=EENEN$, whose faces are labeled by covering $\nu$-faces. Each face is additionally labeled by the term it contributes to $F_{\nu}(x,y)$.}
	\label{fig:eenen_face}
\end{figure}

\subsection{The $\nu$-associahedron via $\nu$-Schr{\"o}der paths}\label{sec_schroeder}
The faces of the $\nu$-associahedron can also be labeled in terms of another nice family of combinatorial objects called $\nu$-Schr{\"o}der paths~\cite{bell20schroder}.
A \defn{$\nu$-Schr{\"o}der path} is a lattice path consisting of north steps~$N$, east steps $E$, and diagonal steps $D=(1,1)$, that shares the start and end points with~$\nu$ and lies weakly above $\nu$. It was shown in~\cite{bell20schroder}*{Section~3.1} that the set of $\nu$-Schr{\"o}der paths is in bijection with the set of covering $\nu$-faces (called \emph{$\nu$-Schr{\"o}der trees} in that paper). The bijection is essentially the same as the right flushing bijection from Section~\ref{sec_rightflushingbijection}, with the small difference that the forbidden positions are those that are above nodes corresponding to the initial points of the east and diagonal steps of the $\nu$-Schr{\"o}der path.
This bijection is illustrated in Figure~\ref{fig_right_flushing_SchroederPaths}. 

\begin{figure}
	\centering
	\includegraphics[scale=.85,page=26]{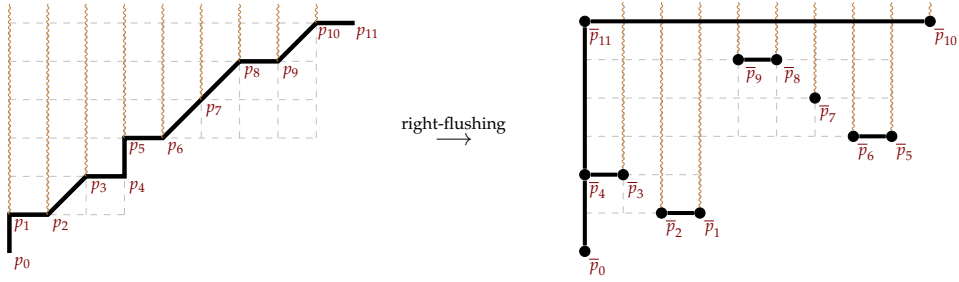}
	\caption{Illustrating the bijection from $\nu$-Schr{\"o}der paths to covering $\nu$-faces from~\cite{bell20schroder}.}
	\label{fig_right_flushing_SchroederPaths}
\end{figure}

The faces of the $\nu$-associahedron $\Asso(\nu)$ are therefore in correspondence with $\nu$-Schr{\"o}der paths. The dimension of a face associated with a $\nu$-Schr{\"o}der path is equal to the number of diagonal steps. The face poset of $\Asso(\nu)$ can also be described in terms of a poset on $\nu$-Schr{\"o}der paths but the definition is a bit more involved, see~\cite{bell20schroder}*{Definition~3.12 and Theorem~4.7}.  
We have redrawn the $\nu$-associahedron from Figure~\ref{fig:eenen_face} in terms of $\nu$-Schr{\"o}der paths in Figure~\ref{fig:eenen_face_schroeder}.

\begin{figure}
	\centering
	\includegraphics[scale=1,page=25]{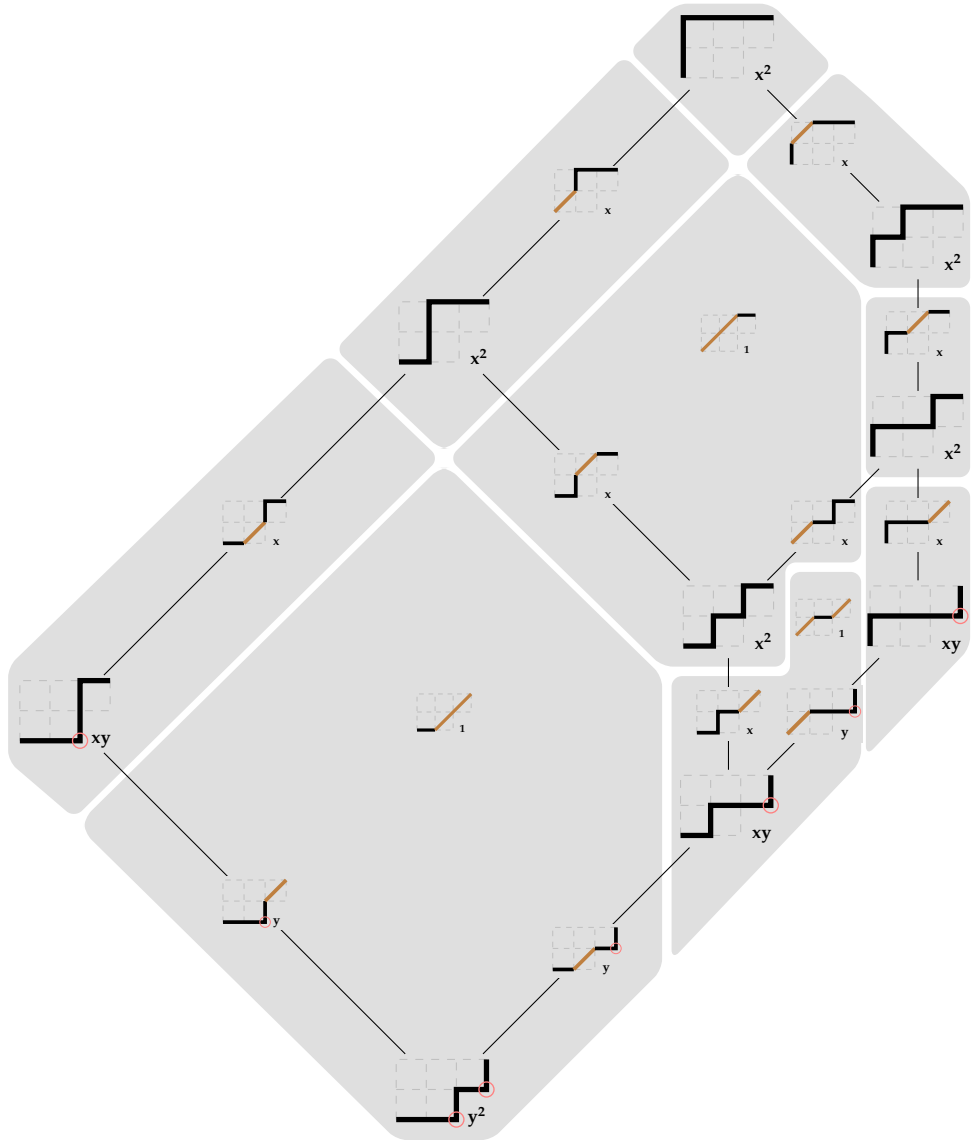}
	\caption{The $\nu$-associahedron $\Asso(\nu)$ for $\nu=EENEN$, whose faces are labeled by $\nu$-Schr{\"o}der trees. Each face is additionally labeled by the term it contributes to $F_{\nu}(x,y)$.}
	\label{fig:eenen_face_schroeder}
\end{figure}

\section{The $F$- and the $H$-triangle associated with $\nu$}
	\label{sec:nu_triangles}
Let $C\in \Cov(\nu)$ be a covering $\nu$-face.
We say that $p\in C$ is \defn{relevant} if:
\begin{itemize}
	\item it is in the first column, 
	\item there is another point $q\neq p$ in $C$ that is in the same row, and
	\item its row contains a valley of $\nu$.
\end{itemize}
We denote by $\Relevantascent(C)$ the set of relevant nodes in $C$, and we let 
\begin{equation}\label{eq_rel}
	\relevantascent(C) \defs\bigl\lvert\Relevantascent(C)\bigr\vert.
\end{equation}
We also define the statistic
\begin{align}\label{eq_corel}
	\corelevant(C) & \defs \deg(\nu)-\dim(C)-\relevantascent(C) \\
	 & = \codim(C)-\relevantascent(C),
\end{align}
where $\codim(C) \defs \deg(\nu)-\dim(C)= \dim\bigl(\Asso(\nu)\bigr)-\dim(C)$ denotes the codimension of the face $C$ in the~$\nu$-associahedron $\Asso(\nu)$.

The \defn{$F$-triangle} of $\Asso(\nu)$ is a generating function of the faces of $\Asso(\nu)$ defined by:
\begin{equation}\label{eq:f_triangle}
	F_{\nu}(x,y) \defs \sum_{C\in \Cov(\nu)} x^{\corelevant(C)} y^{\relevantascent(C)}.
\end{equation}
Note that the degree of the term associated with $C$ is $\corelevant(C)+\relevantascent(C)=\codim(C)$.

In Figure~\ref{fig:eenen_face}, the positions of the relevant nodes are circled in red in order to easily visualize the value of the statistic $\relevantascent(C)$ on each face. The degree is $\deg(\nu)= 2 = \dim(\Asso(\nu))$. In addition, each face $C$ is labeled by the term it contributes to the $F$-triangle, whose degree is $\codim(C)$ (vertices have degree 2, edges degree 1, and 2-faces degree 0). Adding up, we obtain
\begin{displaymath}
	F_{EENEN}(x,y) = 5x^{2} + 3xy + y^{2} + 8x + 3y + 3.
\end{displaymath}

\begin{remark}
The $F$-triangle defined in Equation~\eqref{eq:f_triangle} is a very natural generalization of Chapoton's original definition of the $F$-triangle for cluster complexes of type~$A$. Indeed, as we will explain below, clusters of type $A_n$ can be identified with covering $\nu$-faces for the stair case path $\nu=(EN)^n$, and this identification transforms the $\relevantascent$ and $\corelevant$ statistics of a covering $\nu$-face to the statistics counting the negative simple roots and positive roots of the corresponding cluster, respectively. These two statistics are the statistics used in Chapoton's original definition of the $F$-triangle for cluster complexes.

The bijection between covering $\nu$-faces for the staircase path $\nu=(EN)^n$ and clusters of type $A_n$ works as follows.
First, observe that every covering $\nu$-face contains the top left corner of the Ferrers diagram above $\nu$, as well as all the valleys of $\nu$.
Label all other integer points in the Ferrers diagram with almost positive roots as in Figure~\ref{fig:staircase_roots}: the negative simple roots are placed on the first column, and the positive roots are placed forming the triangular missing part as shown. The cluster associated to a covering $\nu$-face is just the set of its labels; the fact that this is a bijection follows \ie from~\cite{ceballos001}*{Theorem~2.2} in combination with~\cite{ceballos20nu}*{Section~5}. 

Under this correspondence, $\relevantascent(C)$ coincides with the number of negative simple roots of the associated cluster, because it counts the number of elements of $C$ in the first column except for the top left corner. 
On the other hand, $\codim(C)$ is equal to the number of elements of $C$ which are not the top left corner or a valley of $\nu$. Since $\corelevant(C)=\codim(C)-\relevantascent(C)$, this statistic is counting the number of positive roots of the corresponding cluster.  
\end{remark}

\begin{remark}
Following Section~\ref{sec_schroeder}, one can alternatively define the $F$-triangle as a generating function on $\nu$-Schr{\"o}der paths $\pi$:
\begin{equation}\label{eq_f_triangle_schroeder} 
	F_{\nu}(x,y) = \sum_{\pi} x^{\codim(\pi)-\return(\pi)} y^{\return(\pi)}.
\end{equation}
The statistic $\return(\pi)$ is the number of \defn{returns} of $\pi$ (which are defined as valleys of~$\pi$ which are also valleys of $\nu$).
The reason that this is the desired statistic is that, under the right flushing bijection, such returns are precisely the lattice points of~$\pi$ that are mapped to the relevant nodes of the corresponding covering $\nu$-face~$C$; in other words, $\return(\pi)=\relevantascent(C)$.
The term $\codim(\pi)=\codim(C)$ is the codimension of the face $C\in \Asso(\nu)$; since the dimension of this face is equal to the number of diagonal steps of $\pi$, we have that $\codim(\pi)$ is equal to $\deg(\nu)$ minus the number of diagonal steps in $\pi$.
\end{remark}

The \defn{$H$-triangle} of $\nu$ is the generating function of the elements of $\Dyck_{\nu}$ in terms of the number of valleys and returns:
\begin{equation}\label{eq:h_triangle}
	H_{\nu}(x,y) \defs \sum_{\mu\in\Dyck_{\nu}}x^{\valley(\mu)}y^{\return(\mu)}.
\end{equation}
In Figure~\ref{fig:eenen_tamari_paths_fhlabel}, we have marked the valleys in each path by a blue dot, and we have circled the returns in red.  Additionally, we have noted the term each path contributes to $H_{EENEN}(x,y)$ in blue (bottom expression).  Adding up, we obtain
\begin{displaymath}
	H_{EENEN}(x,y) = x^{2}y^{2} + x^{2}y + x^{2} + 2xy + 3x + 1.
\end{displaymath}

\begin{figure}
	\centering
	\includegraphics[page=23,scale=1]{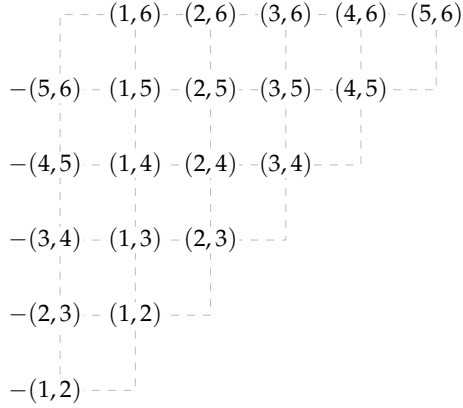}
	\caption{Labeling lattice points in the staircase shape by almost positive roots.}
	\label{fig:staircase_roots}
\end{figure}

\begin{remark}
	It is worth mentioning that the two polynomials $F_{\nu}(x,y)$ and $H_{\nu}(x,y)$ associated to a path $\nu$ remain unchanged after adding north steps at the beginning of the path and east steps at the end. In other words, if $\nu'=N^{a}\nu E^{b}$ then $F_{\nu'}(x,y)=F_{\nu}(x,y)$ and $H_{\nu'}(x,y)=H_{\nu}(x,y)$. Therefore, we may assume without loss of generality that $\nu$ starts with an east step and ends with a north step.  This assumption, however, has no impact on our proofs.
	
	The fact that $H_{\nu'}(x,y)=H_{\nu}(x,y)$ is straightforward: for any $\mu\in\Dyck_{\nu}$, we have $N^{a}\mu E^{b}\in\Dyck_{\nu'}$, and this is clearly a bijective correspondence.  Moreover, since the $H$-triangle enumerates paths with respect to the number of valleys and returns, it is clear that $H_{\nu'}(x,y)=H_{\nu}(x,y)$, because no valley can occur in the first column or the last row.  
	
	The fact that $F_{\nu'}(x,y)=F_{\nu}(x,y)$ can be explained as follows. Geometrically, any lattice point in $A_{\nu'}$ which occurs in the prefix $N^{a}$ or in the suffix $E^{b}$ is compatible with any other lattice point in $A_{\nu'}$. This implies that $\TamariComplex(\nu')$ is isomorphic to the join of $\TamariComplex(\nu)$ with $a+b$ single points $p_1,\dots,p_{a+b}$.  On the other hand, since every covering $\nu$-face is required to have at least one point in each row and column, these $a+b$ points belong to every covering $\nu$-face $C'$ in $\Asso(\nu')$. Therefore, the map $C\rightarrow C'$ where $C'=C\cup \{p_1,\dots,p_{a+b}\}$ is a bijection between $\Asso(\nu)$ and $\Asso(\nu')$.  
	It is straightforward to check that $\dim(C)=\dim(C')$, and so $\Asso(\nu)\cong\Asso(\nu')$. By definition, no lattice point in $A_{\nu'}$ in the prefix $N^{a}$ is relevant which implies $\relevantascent(C)=\relevantascent(C')$ and $\corelevant(C)=\corelevant(C')$.  The $F$-triangle thus remains unchanged.
\end{remark}

\section{Proof of the $F{=}H$ correspondence}
	\label{sec:main_proof}
In this section, we prove Theorem~\ref{thm:fh_correspondence}.  To illustrate this result, we reconsider our running example for $\nu=EENEN$.  We have
\begin{align*}
	x^{2}H_{EENEN}\left(\frac{x+1}{x},\frac{y+1}{x+1}\right) & = (y+1)^{2} + (x+1)(y+1) + (x+1)^{2} + 2x(y+1)\\
	& \kern1cm + 3x(x+1) + x^{2}\\
	& = y^{2}+2y+1 + xy+x+y+1 + x^{2}+2x+1 + 2xy\\
	& \kern1cm + 2x + 3x^{2}+3x + x^{2}\\
	& = 5x^{2} + 3xy + y^{2} + 8x + 3y + 3\\
	& = F_{EENEN}(x,y).
\end{align*}

Now, in general, if we plug in the definition of $H_{\nu}$ in \eqref{eq:h_to_f}, we obtain:
\begin{align*}
	x^{\deg(\nu)}H_{\nu}\left(\frac{x+1}{x},\frac{y+1}{x+1}\right) & = x^{\deg(\nu)}\sum_{\mu\in\Dyck_{\nu}}\left(\frac{x+1}{x}\right)^{\valley(\mu)}\left(\frac{y+1}{x+1}\right)^{\return(\mu)}\\
	& = \sum_{\mu\in\Dyck_{\nu}}x^{\deg(\nu)-\valley(\mu)}(x+1)^{\valley(\mu)-\return(\mu)}(y+1)^{\return(\mu)}.
\end{align*}

This is certainly a polynomial in $x$ and $y$ with nonnegative integer coefficients, because $\valley(\mu)\geq\return(\mu)$ and $\deg(\nu)=\max\bigl\{\valley(\mu)\mid\mu\in\Dyck_{\nu}\bigr\}$. Theorem~\ref{thm:fh_correspondence} is then equivalent to the following proposition. 

\begin{proposition}\label{prop_Ftriangle_paths}
	For every northeast path $\nu$, the following holds:
	\begin{equation}\label{eq:h_to_f_on_paths}
		F_{\nu}(x,y) = \sum_{\mu\in\Dyck_{\nu}}x^{\deg(\nu)-\valley(\mu)}(x+1)^{\valley(\mu)-\return(\mu)}(y+1)^{\return(\mu)}.
	\end{equation}
\end{proposition}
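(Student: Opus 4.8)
The plan is to establish the equivalent identity \eqref{eq:h_to_f_on_paths} by working with the $\nu$-Schröder path model of the $F$-triangle recorded in \eqref{eq_f_triangle_schroeder}, rather than with covering $\nu$-faces directly. The bridge is a \emph{flattening} map $\Psi$ from $\nu$-Schröder paths to $\nu$-paths: given a $\nu$-Schröder path $\pi$, replace every diagonal step $D=(1,1)$ by the pair $EN$. Conversely, given a $\nu$-path $\mu$ and a set $W$ of valleys of $\mu$, I \emph{promote} each valley in $W$, that is, I replace the two steps $E,N$ meeting at that valley corner by a single diagonal step $D$. Because a valley corner is the lowest point of its $EN$-pair, promotion raises the path and hence keeps it weakly above $\nu$; conversely, using that a diagonal of a $\nu$-Schröder path only occurs over a cell lying weakly above $\nu$, flattening keeps the path weakly above $\nu$ as well. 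The $EN$-pairs at distinct valleys are step-disjoint, so promotion of a subset can be carried out simultaneously and independently.

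The first block of work is to prove that $\Psi$ realizes the set of $\nu$-Schröder paths as the disjoint union, over $\mu\in\Dyck_{\nu}$, of the fibers $\Psi^{-1}(\mu)$, and that each fiber is canonically the Boolean lattice on the set $V_{\mu}$ of valleys of $\mu$: the path $\pi$ corresponds to the set $W\subseteq V_{\mu}$ of valleys that have been promoted to diagonals. The bookkeeping facts to verify are that (i) the diagonal steps of $\pi$ sit exactly at the corners indexed by $W$, each of which becomes a valley of $\mu=\Psi(\pi)$, and (ii) the honest $EN$-corners of $\pi$, that is, the valleys of $\pi$, are exactly the non-promoted valleys $V_{\mu}\setminus W$, with promotion of one valley never altering the valley status of another. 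In particular the number of diagonal steps of $\pi$ equals $|W|$.

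With the fiber description in hand, the computation is a routine binomial split. For a fixed $\mu$ write $v=\valley(\mu)$ and $r=\return(\mu)$, and let $R_{\mu}\subseteq V_{\mu}$ be the set of returns, so $|R_{\mu}|=r$. For $\pi\in\Psi^{-1}(\mu)$ indexed by $W$, the dimension of the corresponding face is the number of diagonals, so $\codim(\pi)=\deg(\nu)-|W|$; and since the returns of $\pi$ are precisely the non-promoted returns, $\return(\pi)=|R_{\mu}\setminus W|=r-|W\cap R_{\mu}|$. Splitting $W$ into $a=|W\cap R_{\mu}|$ promoted returns and $b=|W\setminus R_{\mu}|$ promoted non-return valleys, the exponent of $x$ in \eqref{eq_f_triangle_schroeder} becomes $\codim(\pi)-\return(\pi)=\deg(\nu)-r-b$ and the exponent of $y$ becomes $r-a$. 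Summing over the $\binom{r}{a}\binom{v-r}{b}$ choices gives
\begin{displaymath}
	\sum_{a=0}^{r}\sum_{b=0}^{v-r}\binom{r}{a}\binom{v-r}{b}x^{\deg(\nu)-r-b}y^{r-a}
	= x^{\deg(\nu)-v}(x+1)^{v-r}(y+1)^{r},
\end{displaymath}
which is exactly the summand of \eqref{eq:h_to_f_on_paths} attached to $\mu$. Summing over all $\mu\in\Dyck_{\nu}$ then yields the proposition.

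The routine part is the binomial identity; the real content, and the step I expect to be the main obstacle, is the first block: proving rigorously that flattening is well-defined (stays weakly above $\nu$), that its fibers are exactly the Boolean lattices on $V_{\mu}$, and above all that the returns of $\pi$ match the non-promoted returns so that $\return(\pi)=|R_{\mu}\setminus W|$. This is where the geometry of $F_{\nu}$ and the precise definition of $\nu$-Schröder paths must be used, and it is the combinatorial heart that makes the substitution in Theorem~\ref{thm:fh_correspondence} transparent. An alternative route, closer to the hint in the introduction, would replace $\nu$-Schröder paths by $\nu$-trees via the right flushing bijection and interpret promotion as the removal of ``removable'' nodes, the relevant nodes governing the $y$-variable and the remaining valley-nodes governing the $x$-variable, from the vertex associated with $\mu$; the bijective content is the same, but the Schröder-path phrasing keeps the valley and return statistics manifest.
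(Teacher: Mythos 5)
Your proof is correct, and it is in essence the mirror image of the paper's argument, transported to a different combinatorial model. The paper works with covering $\nu$-faces directly: it invokes Lemma~\ref{lem_asso_characterization_faces} (from \cite{ceballos19sweakorder}) to write every face uniquely as $T\setminus A$ with $T$ a $\nu$-tree and $A\subseteq\Ascent(T)$, groups the faces by their bottom tree $T$, performs the binomial expansion within each group (Proposition~\ref{prop_FtriangleT}), and only at the very end transports the statistics from trees to paths via the right flushing bijection (Lemma~\ref{lem_changingstatistics}). You instead start from the $\nu$-Schr{\"o}der path expression \eqref{eq_f_triangle_schroeder}, group Schr{\"o}der paths by their flattening $\mu=\Psi(\pi)$, and identify each fiber with the Boolean lattice on the valleys of $\mu$; your binomial computation per fiber is then identical in substance to Proposition~\ref{prop_FtriangleT}. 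Under the bijection of \cite{bell20schroder} (which, as noted in Section~\ref{sec_schroeder}, is essentially right flushing), your decomposition ``Schr{\"o}der path equals flattened $\nu$-path plus promoted valleys'' corresponds exactly to the paper's ``covering face equals bottom tree minus removed ascents,'' so the two proofs match term by term. What your route buys: the statistics $\valley$ and $\return$ stay visible throughout, no statistic-transfer lemma is needed at the end, and the ``first block'' you rightly flag as the crux does go through by the local arguments you sketch --- a diagonal step from $(a,b)$ to $(a+1,b+1)$ lies weakly above $\nu$ precisely when $(a+1,b)$ does, so flattening cannot cross $\nu$; promotion only raises the path; and valley/return status at one corner is unaffected by promotion at others, so the fibers are Boolean and $\return(\pi)=\lvert R_{\mu}\setminus W\rvert$ as you claim. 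What it costs: your starting point \eqref{eq_f_triangle_schroeder} is stated in the paper only as a remark, and its justification (that the face dimension equals the number of diagonal steps and that $\return(\pi)=\relevantascent(C)$) itself rests on the right-flushing correspondence together with \cite{bell20schroder}; a fully self-contained write-up would therefore have to re-import roughly the content of Lemma~\ref{lem_changingstatistics} and Lemma~\ref{lem_asso_characterization_faces} at that point, which is precisely the machinery the paper's own proof makes explicit.
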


In order to prove this proposition we will first transform this expression to another expression in terms of $\nu$-trees, using the bijection from Section~\ref{sec_rightflushingbijection} (see Proposition~\ref{prop_Ftriangle_trees}). The second ingredient in our proof will be to show that the term associated with a $\nu$-tree in this new expression is equal to the sum of terms contributed by a specific group of faces in the definition of the $F$-triangle (see Proposition~\ref{prop_FtriangleT}). In order to shape our intuition, these groups are visualized (as shadowed groups) in Figure~\ref{fig:eenen_face} for our running example.

We start by explaining that the bijection $\Phi$ from $\nu$-paths to $\nu$-trees sends the valleys and returns to ascents and relevant nodes.  To do so, we define another statistic on $\nu$-trees; the \defn{horizontal distance} $\hroot$.  If $T\in\Trees_{\nu}$ and $p\in T$ is a node, then $\hroot_{T}(p)$ equals the number of horizontal edges in the unique path in $T$ connecting $p$ to the root.  For instance, in the $\nu$-tree of Figure~\ref{fig:right_flushing}, the node labeled $\overline{p}_{9}$ has horizontal distance $3$, counting the edges $\{\overline{p}_{9},\overline{p}_{10}\}$, $\{\overline{p}_{11},\overline{p}_{12}\}$, $\{\overline{p}_{12},\overline{p}_{13}\}$.  

\begin{lemma}\label{lem_changingstatistics}
	Let $\mu$ be a $\nu$-path and $T=\Phi(\mu)$ be its corresponding $\nu$-tree. Then,
	\begin{enumerate}[\rm (i)]
		\item \label{lem_changingstatistics_one} $\valley(\mu)=\ascent(T)$, \quad and 
		\item $\return(\mu)=\relevantascent(T)$.
	\end{enumerate}
\end{lemma}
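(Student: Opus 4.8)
The plan is to read both statistics directly off the height sequence $(a_0,\ldots,a_n)$ of $\mu$, where $a_i$ counts the lattice points of $\mu$ at height $i$, and to exploit that $\Phi$ preserves this sequence. First I would observe that the lattice points of $\mu$ in row $i$ form a single horizontal run, so the only possible valley in row $i$ is its rightmost point; this point is a valley exactly when it is entered by an east step and left by a north step, i.e.\ precisely when $a_i\ge 2$ and $i<n$. Hence $\valley(\mu)=\#\{i<n:a_i\ge 2\}$, and the valley in such a row $i$ sits at some horizontal distance $\horiz_\nu$ from the right boundary of $F_\nu$.

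On the tree side I would analyse the right-flushing construction. The forbidden positions show that a node of $T$ can have a neighbour directly above it only if it is the leftmost node of its row; since an ascent is a node possessing both an up-neighbour and a right-neighbour, every ascent is the leftmost node of a row with at least two nodes. Conversely, because $T$ is connected to its top-left root through column $0$, the leftmost node of any non-top row with at least two nodes does have an up-neighbour, hence is an ascent. As $\Phi$ preserves the height sequence, this gives $\ascent(T)=\#\{i<n:a_i\ge 2\}=\valley(\mu)$, proving (i), and it pins down the correspondence: the valley of $\mu$ in row $i$ is matched with the ascent $w_i$ that is the leftmost node of row $i$ in $T$. (Alternatively, (i) follows abstractly, since $\Phi$ is a lattice isomorphism and valleys, respectively ascents, index the upper covers of $\mu$, respectively of $T=\Phi(\mu)$.)

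For (ii) I would first note that the relevant nodes form a subset of the ascents: a relevant node lies in the first column, hence is leftmost in its row, and it is not the root (the top row carries no valley of $\nu$), so it has an up-neighbour in column $0$; thus the relevant nodes are exactly the ascents that lie in the first column and whose row contains a valley of $\nu$. It therefore suffices to decide, row by row, when the valley $v_i$ of $\mu$ is a return and when the corresponding ascent $w_i$ is relevant. Two local criteria do this. On the path side, $v_i$ is a valley of $\nu$ if and only if $\horiz_\nu(v_i)=0$ and row $i$ contains a valley of $\nu$, because a valley of $\nu$ is exactly the rightmost point of $F_\nu$ in its row. On the tree side, $w_i$ lies in the first column if and only if $\hroot_T(w_i)=0$, since this is precisely the condition that the path in $T$ from $w_i$ to the root use no horizontal edge.

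The crux, and the step I expect to be the main obstacle, is to prove that $\Phi$ intertwines these two horizontal measurements, namely $\horiz_\nu(v_i)=\hroot_T(w_i)$; I would establish this by induction on the rows in the right-flushing construction, the point being that the reversal of each row built into right flushing turns ``number of steps to the right boundary of $F_\nu$'' into ``number of horizontal edges to the root,'' and in particular sends the value $0$ to $0$. Granting this identity, $v_i$ is a return if and only if $\horiz_\nu(v_i)=0$ and row $i$ carries a valley of $\nu$, if and only if $\hroot_T(w_i)=0$ and row $i$ carries a valley of $\nu$, if and only if $w_i$ is relevant. Summing over the rows that carry a valley then yields $\return(\mu)=\relevantascent(T)$, which is (ii).
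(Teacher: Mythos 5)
Your proposal is correct and takes essentially the same route as the paper's proof: the same row-by-row correspondence (the valley of row $i$ matched with the leftmost node of row $i$ of $T$), the same local characterizations of valleys/ascents and of returns/relevant nodes, and the same reduction of part (ii) to the horizontal-distance identity $\horiz_\nu(p)=\hroot_T(\overline{p})$, which the paper also states without detailed proof (``it is not hard to verify''). Your additional material -- the height-sequence count $\#\{i<n : a_i\ge 2\}$ for (i) and the alternative argument via the lattice isomorphism -- is sound but does not change the substance of the argument.
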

\begin{proof}
	We denote by $p_0,p_1,\dots, p_\ell$ the lattice points of $\mu$ in the order they appear along the path. We denote by $\overline p_0, \overline p_1,\dots ,\overline p_\ell$ the nodes in $T$ ordered from the bottom row to the top row, and in each row from right to left. See Figure~\ref{fig:right_flushing} for an example.  
	Then, a point $p\in \mu$ is a valley if and only if its corresponding node $\overline p$ is an ascent of $T$. Therefore $\valley(\mu)=\ascent(T)$.

	For the second claim of the lemma, it is not hard to verify that 
	\begin{displaymath}
		\horiz_\nu(p_i) = \hroot_T(\overline p_i)
	\end{displaymath}
	for $0\leq i\leq\ell$. 
	Let $p$ be a return of $\mu$, \ie a valley of $\mu$ which is also a valley of~$\nu$.  In particular, it satisfies $\horiz_\nu(p)=0$. Therefore, the node~$\overline p\in T$ corresponding to~$p$ is in the first column because $\hroot_{T}(\overline{p})=0$.  Since $p$ is a valley of $\mu$, it follows that there must be another node $\overline{q}$ in the same row as $\overline{p}$.  
	Since $p$ is also valley of $\nu$, the row of $\overline{p}$ contains a valley of $\nu$. These three properties imply that $\overline{p}$ is relevant.  
	Vice-versa, if~$\overline p\in T$ is a relevant node, then we can similarly show that $p\in \mu$ must be a return of $\mu$.  Therefore, $\return(\mu)=\relevantascent(T)$.
\end{proof}

\begin{lemma}
	For a $\nu$-tree $T$, the following holds:
	\begin{align}
		(y+1)^{\relevantascent(T)} & = \sum_{A'\subseteq \Relevantascent(T)} y^{\relevantascent(T)-\lvert A'\rvert}\label{identityone},\\
		x^{\deg(\nu)-\ascent(T)}(x+1)^{\ascent(T)-\relevantascent(T)} & = \sum_{A''\subseteq \Ascent(T) \setminus \Relevantascent(T)} x^{\deg(\nu)-\relevantascent(T)-\lvert A''\rvert}\label{identitytwo}.
	\end{align}
\end{lemma}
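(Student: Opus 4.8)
The plan is to recognize both identities as instances of the binomial theorem, organizing each subset-sum according to the cardinality of the indexing subset.

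For \eqref{identityone}, I would set $r \defs \relevantascent(T) = \lvert\Relevantascent(T)\rvert$ and partition the subsets $A' \subseteq \Relevantascent(T)$ by their size. Since there are exactly $\binom{r}{k}$ subsets of size $k$, the right-hand side becomes $\sum_{k=0}^{r}\binom{r}{k}y^{r-k}$, which is $(y+1)^{r}$ by the binomial theorem. This is precisely the left-hand side.

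For \eqref{identitytwo}, the analogous computation first requires the size of the ground set $\Ascent(T)\setminus\Relevantascent(T)$. The key point is that every relevant node is an ascent, so that $\Relevantascent(T)\subseteq\Ascent(T)$ and hence $\lvert\Ascent(T)\setminus\Relevantascent(T)\rvert = \ascent(T)-\relevantascent(T)$. This containment follows from Lemma~\ref{lem_changingstatistics}: a relevant node corresponds under $\Phi$ to a return of $\mu$, a return is a valley, and valleys correspond to ascents. Writing $s \defs \ascent(T)-\relevantascent(T)$, I would group the subsets $A''$ by size and factor $x^{\deg(\nu)-\relevantascent(T)-s}$ out of the resulting sum $\sum_{k=0}^{s}\binom{s}{k}x^{\deg(\nu)-\relevantascent(T)-k}$, obtaining $x^{\deg(\nu)-\relevantascent(T)-s}(x+1)^{s}$. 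Substituting $s = \ascent(T)-\relevantascent(T)$ simplifies the prefactor exponent to $\deg(\nu)-\ascent(T)$, which recovers the left-hand side.

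Neither step presents a genuine obstacle, since both are the binomial theorem in disguise. The only thing worth checking with care is the containment $\Relevantascent(T)\subseteq\Ascent(T)$ underlying the cardinality count in \eqref{identitytwo}, and this is immediate from the correspondence established in Lemma~\ref{lem_changingstatistics}. It is also worth observing, although it is not needed for the algebraic identity itself, that $\deg(\nu)-\ascent(T)\geq 0$, so that both sides are honest polynomials; this holds because $\ascent(T)=\valley(\mu)\leq\deg(\nu)$ by the definition of $\deg(\nu)$ as the maximal number of valleys of a $\nu$-path.
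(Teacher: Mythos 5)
Your proof is correct and takes essentially the same route as the paper's: both identities are proved by grouping the subsets according to their cardinality and invoking the binomial theorem, using $\lvert\Ascent(T)\setminus\Relevantascent(T)\rvert=\ascent(T)-\relevantascent(T)$ for the second identity. Your explicit verification of the containment $\Relevantascent(T)\subseteq\Ascent(T)$ via the correspondence in Lemma~\ref{lem_changingstatistics} is a small point the paper leaves implicit, but it does not alter the argument.
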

\begin{proof}
	Since $\bigl\lvert\Relevantascent(T)\bigr\rvert=\relevantascent(T)$, Equation~\eqref{identityone} follows from the Binomial Theorem:
	\begin{displaymath}
		(y+1)^{\relevantascent(T)} = \sum_{k=0}^{\relevantascent(T)}\binom{\relevantascent(T)}{k}y^{\relevantascent(T)-k} = \sum_{A'\subseteq\Relevantascent(T)}y^{\relevantascent(T)-\lvert A'\rvert}
	\end{displaymath}
	Since $\lvert\Ascent(T) \setminus \Relevantascent(T)\rvert=\ascent(T)-\relevantascent(T)$, Equation~\eqref{identitytwo} can be shown similarly:
	\begin{align*}
		x^{\deg(\nu)-\ascent(T)}(x+1)^{\ascent(T)-\relevantascent(T)} & = x^{\deg(\nu)-\ascent(T)}\sum_{A''\subseteq\Ascent(T)\setminus\Relevantascent(T)}x^{\ascent(T)-\relevantascent(T)-\lvert A''\rvert}\\
		& = \sum_{A''\subseteq\Ascent(T)\setminus\Relevantascent(T)}x^{\deg(\nu)-\relevantascent(T)-\lvert A''\rvert}.\qedhere
	\end{align*}
\end{proof}

\begin{lemma}[\cite{ceballos19sweakorder}*{Lemma~5.4}]\label{lem_asso_characterization_faces}
	The sets $\bigl\{(T,A)\mid T\in\Trees_{\nu}, A\subseteq\Ascent(T)\bigr\}$ and $\Asso(\nu)$ are in bijection via the map $(T,A)\mapsto T\setminus A$.
	The dimension of the face $T\setminus A$ in the $\nu$-associahedron is $\dim(T\setminus A)=|A|$.
\end{lemma}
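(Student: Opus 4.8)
The plan is to verify the dimension formula directly and then establish the bijection by constructing its inverse. The dimension formula is immediate: every $\nu$-tree has exactly $m+n+1$ nodes, and since $A\subseteq\Ascent(T)\subseteq T$ we have $\lvert T\setminus A\rvert=m+n+1-\lvert A\rvert$; substituting into the definition of dimension gives $\dim(T\setminus A)=\lvert A\rvert$. So everything reduces to proving that $(T,A)\mapsto T\setminus A$ is a well-defined bijection onto $\Asso(\nu)$.

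For well-definedness I would first observe that $T\setminus A$ is a subset of the pairwise $\nu$-compatible set $T$, hence is a $\nu$-face; the only thing to check is that it is covering. The key structural remark is that an ascent is never extremal. Indeed, if $q=p\llcorner r$ with $p,r\in T$, then $q$ is the lower-left corner of the box $p\square r$, so $p$ and $r$ are its upper-left and lower-right corners; this means $T$ contains a node strictly above $q$ in its column and a node strictly to the right of $q$ in its row. Consequently the topmost node of each column and the rightmost node of each row are never ascents. These nodes lie in $T\setminus\Ascent(T)\subseteq T\setminus A$, so every column and every row of $F_\nu$ stays occupied, and the top-left corner of $A_\nu$, which lies in every $\nu$-tree and is the topmost node of the first column, is never removed. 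Hence $T\setminus A$ is a covering $\nu$-face.

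For the bijection I would construct the inverse map. Given a covering $\nu$-face $C$, the candidate preimages are exactly the $\nu$-trees $T$ with $T\setminus C\subseteq\Ascent(T)$, since $T\setminus A=C$ forces $A=T\setminus C$. Using that $\Asso(\nu)$ is a polytopal complex, the $\nu$-trees containing $C$ are precisely the vertices of the cell $C$, and rotating at a removable ascent moves along an edge of this cell in the direction of $\leq_{\nu}$. The condition $T\setminus C\subseteq\Ascent(T)$ then says that every edge of the cell at $T$ is an upward rotation, i.e.\ that $T$ is the $\leq_{\nu}$-minimal vertex of the cell $C$. I would prove existence of such a $T$ by a greedy ``lower-left completion'': starting from $C$, repeatedly adjoin a point of $A_\nu$ that is $\nu$-compatible with the current set and that acquires both a neighbour above it in its column and a neighbour to its right in its row, so that it becomes an ascent. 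Such a point exists as long as the current set is not yet a $\nu$-tree, and once a point has a north- and an east-neighbour this persists as further points are adjoined; this produces a $\nu$-tree $T\supseteq C$ all of whose added nodes are ascents.

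The main obstacle is uniqueness, equivalently the confluence of the lower-left completion, equivalently the fact that each cell of $\Asso(\nu)$ has a unique $\leq_{\nu}$-minimal vertex. One clean route would be to show that the vertices of each cell form an interval of the $\nu$-Tamari lattice, so that a unique bottom element exists; the lattice property of $\Tamari(\nu)$ is what makes this plausible, but pinning down that the completion makes no genuine choices is where the real work lies. Granting uniqueness, the assignment $C\mapsto(T,\,T\setminus C)$ is a well-defined two-sided inverse of $(T,A)\mapsto T\setminus A$, which completes the bijection and, once more, confirms $\dim(C)=\lvert A\rvert$.
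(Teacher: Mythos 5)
First, note that the paper itself offers no proof of this lemma: it is imported verbatim, as a citation, from Lemma~5.4 of the $s$-weak order paper of Ceballos--Pons, so there is no internal argument to compare yours against, and your attempt must be judged on its own. On its own terms, it gets the peripheral parts right but not the core. The dimension formula is fine ($\lvert T\rvert=m+n+1$ and $A\subseteq T$, so $\dim(T\setminus A)=\lvert A\rvert$), and your well-definedness argument is correct and nicely economical: since an ascent $q=p\llcorner r$ has an element of $T$ strictly above it in its column and one strictly to its right in its row, no topmost-in-column or rightmost-in-row node is ever an ascent, so $T\setminus A$ still meets every row and column and still contains the top-left corner, hence is a covering $\nu$-face.

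The genuine gap is bijectivity itself, i.e., the claim that every covering $\nu$-face $C$ contains exactly one $\nu$-tree $T$ with $T\setminus C\subseteq\Ascent(T)$. For existence, your greedy ``lower-left completion'' hinges on the assertion that, as long as the current compatible set is not yet a $\nu$-tree, one can always adjoin a point that is compatible with everything present and already has a neighbour above it in its column and one to its right in its row; that is precisely the kind of statement that needs an argument (why can the process never get stuck, with only points that would fail to be ascents available?), and none is given. For uniqueness you explicitly write ``granting uniqueness'' --- but that concession is the heart of the lemma; everything else is bookkeeping. The alternative route you sketch --- the vertices of each cell of $\Asso(\nu)$ form an interval of $\Tamari(\nu)$, and a vertex all of whose cell-edges are upward rotations is the global minimum of its cell --- is not a shortcut: it presupposes the polytopal and orientation structure of $\Asso(\nu)$ (that the vertices of the cell $C$ are exactly the trees containing $C$, that the edges of a cell are rotations, and a unique-source property of the induced acyclic orientation on each cell), and these facts are of essentially the same depth as the lemma, or must be imported wholesale from the geometric results of Ceballos--Padrol--Sarmiento. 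Note in particular that for an arbitrary acyclic orientation of a polytope's graph, a vertex with all incident edges outgoing need not be a unique minimum, so the step ``locally minimal $\Rightarrow$ the bottom tree'' is not free. In short: correct boundary checks and a correct reduction, but the pivotal existence-and-uniqueness statement that the lemma actually asserts is left unproven.
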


If $C\in\Asso(\nu)$ is of the form $C=T\setminus A$, then we say that $T$ is the \defn{bottom $\nu$-tree} of $C$.  This terminology is motivated as follows.  Recall that $\Asso(\nu)$ is a polytopal complex, so any face $C\in\Asso(\nu)$ is itself a polytope.  The edge graph of $C$ corresponds to an interval of $\Tamari(\nu)$ and as such inherits the orientation given by the partial order~$\leq_{\nu}$.  Then, $T$ is the minimal element of this interval.  Moreover, every ascent $p\in\Ascent(T)$ uniquely determines a $\nu$-tree $T_{p}$ with $T\lessdot_{\nu}T_{p}$; therefore the maximal $\nu$-tree in this interval is $T\vee\bigvee_{p\in\Ascent(T)}T_{p}$ (considered as a join in the lattice $\Tamari(\nu)$).

We denote by $\Cov_{T}(\nu)$ the set of covering $\nu$-faces whose bottom $\nu$-tree is $T$.  We define
\begin{equation}\label{eq:tree_contribution_separate}
	F_{\nu}^T(x,y) \defs \sum_{C\in \Cov_{T}(\nu)} x^{\deg(\nu)-\dim(C)-\relevantascent(C)} y^{\relevantascent(C)}.
\end{equation}
In our example in Figure~\ref{fig:eenen_face}, the sets $\Cov_{T}(\nu)$ are represented by the shadowed groups.
More precisely, the set $\Cov_{T}(\nu)$ consists of the faces belonging to the shadowed group containing $T$. 
The polynomial $F_{\nu}^T(x,y) $ is then the sum of the monomials in that shadowed group.
For instance, if $T_0$ is the bottom tree in Figure~\ref{fig:eenen_face}, then 
\begin{displaymath}
	F_{\nu}^{T_0}(x,y) = y^2+y+y+1 = (y+1)^2. 
\end{displaymath}
Compare the terms in Figure~\ref{fig:eenen_face} with the red ones (top expression per path) in Figure~\ref{fig:eenen_tamari_paths_fhlabel}.

\begin{proposition}\label{prop_FtriangleT}
	For every northeast path $\nu$, the following holds:
	\begin{equation}\label{eq:tree_contribution_full}
		F_{\nu}^T(x,y) = x^{\deg(\nu)-\ascent(T)}(x+1)^{\ascent(T)-\relevantascent(T)}(y+1)^{\relevantascent(T)}.
	\end{equation}
\end{proposition}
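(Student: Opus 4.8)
The plan is to exploit the parametrization of $\Cov_{T}(\nu)$ provided by Lemma~\ref{lem_asso_characterization_faces}, which identifies the faces with bottom $\nu$-tree $T$ with the subsets $A\subseteq\Ascent(T)$ via $C=T\setminus A$, where $\dim(C)=\lvert A\rvert$. Under this identification I would rewrite $F_{\nu}^{T}(x,y)$ in \eqref{eq:tree_contribution_separate} as a sum over all $A\subseteq\Ascent(T)$, so that the whole computation reduces to understanding how the two exponents depend on $A$. The exponent governed by the dimension is immediate; everything hinges on controlling $\relevantascent(C)$ as a function of $A$.

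The heart of the argument is the identity
\begin{equation*}
	\Relevantascent(T\setminus A)=\Relevantascent(T)\setminus A,
	\qquad\text{so that}\qquad
	\relevantascent(C)=\relevantascent(T)-\lvert A\cap\Relevantascent(T)\rvert.
\end{equation*}
I would first record that $\Relevantascent(T)\subseteq\Ascent(T)$; this follows from Lemma~\ref{lem_changingstatistics}, since relevant nodes correspond to returns, returns are valleys, and valleys correspond to ascents. The inclusion $\Relevantascent(C)\subseteq\Relevantascent(T)\setminus A$ is then the easy direction: being in the first column and having a valley of $\nu$ in one's row are intrinsic to a node $p$ and to $\nu$, while the ``another node in the same row'' condition can only be easier to satisfy in the larger set $T\supseteq C$; moreover any relevant node of $C$ lies in $C$ and hence avoids $A$.

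The reverse inclusion is where I expect the main difficulty to lie, because deleting the nodes of $A$ could a priori destroy the row-partner of a surviving relevant node $p\in\Relevantascent(T)\setminus A$. The observation that resolves this is that the rightmost node $z$ of any row is never an ascent: every ascent is a lower-left corner $q=p\llcorner r$ and therefore has one of its two generators lying strictly to its right in its own row, so it cannot be rightmost. Consequently $z\notin A$ for every $A\subseteq\Ascent(T)$, and thus $z\in C$. Now if $p\in\Relevantascent(T)\setminus A$, its row contains at least two nodes and $p$ is the leftmost of them, so the rightmost node $z$ of that row is distinct from $p$ and survives in $C$; this $z$ witnesses the ``another node in the same row'' condition and shows $p\in\Relevantascent(C)$, completing the identity.

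Finally I would assemble the computation. Writing $A=A'\sqcup A''$ with $A'=A\cap\Relevantascent(T)$ and $A''=A\cap\bigl(\Ascent(T)\setminus\Relevantascent(T)\bigr)$ gives, using $\Relevantascent(T)\subseteq\Ascent(T)$, a bijection between subsets $A\subseteq\Ascent(T)$ and pairs $(A',A'')$. Substituting $\dim(C)=\lvert A'\rvert+\lvert A''\rvert$ and $\relevantascent(C)=\relevantascent(T)-\lvert A'\rvert$ into \eqref{eq:tree_contribution_separate} turns the exponent of $x$ into $\deg(\nu)-\relevantascent(T)-\lvert A''\rvert$ and the exponent of $y$ into $\relevantascent(T)-\lvert A'\rvert$. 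The resulting double sum factors as a product of a sum over $A''$ and a sum over $A'$, and applying identities \eqref{identitytwo} and \eqref{identityone} to the two factors yields exactly $x^{\deg(\nu)-\ascent(T)}(x+1)^{\ascent(T)-\relevantascent(T)}(y+1)^{\relevantascent(T)}$, as claimed.
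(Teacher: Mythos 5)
Your proposal is correct and follows essentially the same route as the paper's proof: parametrize $\Cov_{T}(\nu)$ via Lemma~\ref{lem_asso_characterization_faces}, split $A$ into its relevant part $A'$ and non-relevant part $A''$, and factor the resulting sum into the product of \eqref{identityone} and \eqref{identitytwo}. The only difference is that you explicitly verify the identity $\relevantascent(T\setminus A)=\relevantascent(T)-\lvert A\cap\Relevantascent(T)\rvert$ (via the correct observation that the rightmost node of a row is never an ascent, hence never deleted), a step the paper's proof asserts without justification.
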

\begin{proof}
	Let $C\in \Cov_{T}(\nu)$. Then $C=T\setminus A$ for some subset $A$ of ascents of $T$. This subset can be written uniquely as a disjoint union $A=A'\uplus A''$, where $A'\subseteq \Relevantascent(T)$ and $A''\subseteq \Ascent(T) \setminus \Relevantascent(T)$.
	Then $\relevantascent(C) =\relevantascent(T)-\lvert A'\rvert$.  Furthermore $\dim(C)=\lvert A'\rvert+\lvert A''\rvert$, and so $\deg(\nu)-\dim(C)-\relevantascent(C) = \deg(\nu)-\relevantascent(T)-\lvert A''\rvert$. Therefore, 
	\begin{align*}
		F_{\nu}^T(x,y) & = \sum_{C\in \Cov_{T}(\nu)} x^{\deg(\nu)-\dim(C)-\relevantascent(C)} y^{\relevantascent(C)}\\
			& = \sum_{A=A'\uplus A''} x^{\deg(\nu)-\relevantascent(T)-\lvert A''\rvert} y^{\relevantascent(T)-\lvert A'\rvert}.
	\end{align*}
	This is exactly the product of Equations \eqref{identityone} and~\eqref{identitytwo}, and the result follows.
\end{proof}

\begin{proposition}\label{prop_Ftriangle_trees}
	For every northeast path $\nu$, the following holds:
	\begin{equation}
		F_{\nu}(x,y) = \sum_{T\in\Trees_{\nu}}x^{\deg(\nu)-\ascent(T)}(x+1)^{\ascent(T)-\relevantascent(T)}(y+1)^{\relevantascent(T)}.
	\end{equation}
\end{proposition}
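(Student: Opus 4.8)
The plan is to decompose the defining sum of the $F$-triangle according to the bottom $\nu$-tree of each covering $\nu$-face, and then apply Proposition~\ref{prop_FtriangleT} block by block. The whole argument is essentially a reorganization of the sum \eqref{eq:f_triangle}, once the grouping is justified.

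First I would invoke Lemma~\ref{lem_asso_characterization_faces}: every covering $\nu$-face $C\in\Cov(\nu)$ can be written \emph{uniquely} as $C=T\setminus A$ with $T\in\Trees_{\nu}$ and $A\subseteq\Ascent(T)$. In particular, the bottom $\nu$-tree $T$ of $C$ is uniquely determined by $C$. Consequently, the collection $\bigl\{\Cov_{T}(\nu)\mid T\in\Trees_{\nu}\bigr\}$ forms a partition of $\Cov(\nu)$: every covering $\nu$-face lies in exactly one block, namely the one indexed by its bottom $\nu$-tree, and distinct $\nu$-trees give disjoint blocks.

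Next I would rewrite the sum \eqref{eq:f_triangle} by grouping the covering $\nu$-faces along this partition. Recalling from \eqref{eq_corel} that $\corelevant(C)=\deg(\nu)-\dim(C)-\relevantascent(C)$, the inner sum over each block is precisely the polynomial $F_{\nu}^{T}(x,y)$ of \eqref{eq:tree_contribution_separate}:
\[
	F_{\nu}(x,y)=\sum_{C\in\Cov(\nu)}x^{\corelevant(C)}y^{\relevantascent(C)}=\sum_{T\in\Trees_{\nu}}\;\sum_{C\in\Cov_{T}(\nu)}x^{\corelevant(C)}y^{\relevantascent(C)}=\sum_{T\in\Trees_{\nu}}F_{\nu}^{T}(x,y).
\]
Finally I would substitute, for each $T$, the closed form for $F_{\nu}^{T}(x,y)$ supplied by Proposition~\ref{prop_FtriangleT}, which immediately yields the claimed identity.

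I do not expect any genuine obstacle here, since all the substantive work has already been carried out in Proposition~\ref{prop_FtriangleT} (the computation of a single block) and Lemma~\ref{lem_asso_characterization_faces} (the parametrization of faces). The only point requiring care is the verification that the blocks $\Cov_{T}(\nu)$ really do partition $\Cov(\nu)$ — that is, that the assignment of a bottom $\nu$-tree to each covering $\nu$-face is well defined and surjective — but this is exactly the bijective content of Lemma~\ref{lem_asso_characterization_faces}, so the argument reduces to bookkeeping.
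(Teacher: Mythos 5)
Your proposal is correct and matches the paper's own proof exactly: the paper likewise writes $\Cov(\nu)=\biguplus_{T\in\Trees_{\nu}}\Cov_{T}(\nu)$, concludes $F_{\nu}(x,y)=\sum_{T\in\Trees_{\nu}}F_{\nu}^{T}(x,y)$, and finishes by Proposition~\ref{prop_FtriangleT}. Your explicit appeal to Lemma~\ref{lem_asso_characterization_faces} to justify the partition is exactly the (tacit) content behind the paper's disjoint-union notation, so nothing differs in substance.
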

\begin{proof}
	Since $\Cov(\nu) =\biguplus_{T\in\Trees_{\nu}} \Cov_{T}(\nu)$, it follows that
	\begin{align*}
		F_{\nu}(x,y) &= \sum_{T\in \Trees_\nu} F_{\nu}^T(x,y).
	\end{align*}
	The result then follows from Proposition~\ref{prop_FtriangleT}
\end{proof}

\begin{proof}[Proof of Proposition~\ref{prop_Ftriangle_paths}]
	Proposition~\ref{prop_Ftriangle_paths} follows from Proposition~\ref{prop_Ftriangle_trees} and Lemma~\ref{lem_changingstatistics}, by transforming the statistics under the right flushing bijection $\Phi$.
\end{proof}

\begin{proof}[Proof of Theorem~\ref{thm:fh_correspondence}]
	As we have already mentioned, Theorem~\ref{thm:fh_correspondence} is equivalent to Proposition~\ref{prop_Ftriangle_paths}, which we have just proven.
\end{proof}

\begin{example}\label{ex_3dAsso}
	We finish this section by presenting a 3-dimensional example of our results. For this we consider the path $\nu=ENEENEN$. The corresponding~$\nu$-associahedron $\Asso(\nu)$ is illustrated in Figure~\ref{fig:f_triangle_explanation}, whose vertices are labeled by $\nu$-paths and the terms they contribute to the $F$-triangle $F_{\nu}$ (top expression in red obtained from Proposition~\ref{prop_FtriangleT}) and to the $H$-triangle $H_{\nu}$ (bottom expression in blue obtained from~\eqref{eq:h_triangle}). Summing over these labels yields:

	\begin{align*}
		F_{ENEENEN}(x,y) & = 9x^3 + 9x^2y + 4xy^2 + y^3 + 20x^2 + 15xy + 4y^2 + 14x + 6y + 3,\\
		H_{ENEENEN}(x,y) & = x^3y^3 + x^3y^2 + x^3y + 3x^2y^2 + 5x^2y + 3x^2 + 3xy + 5x + 1.
	\end{align*}

	The explicit computation of the terms $F_{\nu}^T(x,y)$, using the definition in \eqref{eq:tree_contribution_separate}, is shown for the two green faces in the figure. This illustrates two examples of Proposition~\ref{prop_FtriangleT}. 
\end{example}

\begin{figure}
	\centering
	\includegraphics[width=\textwidth,page=10]{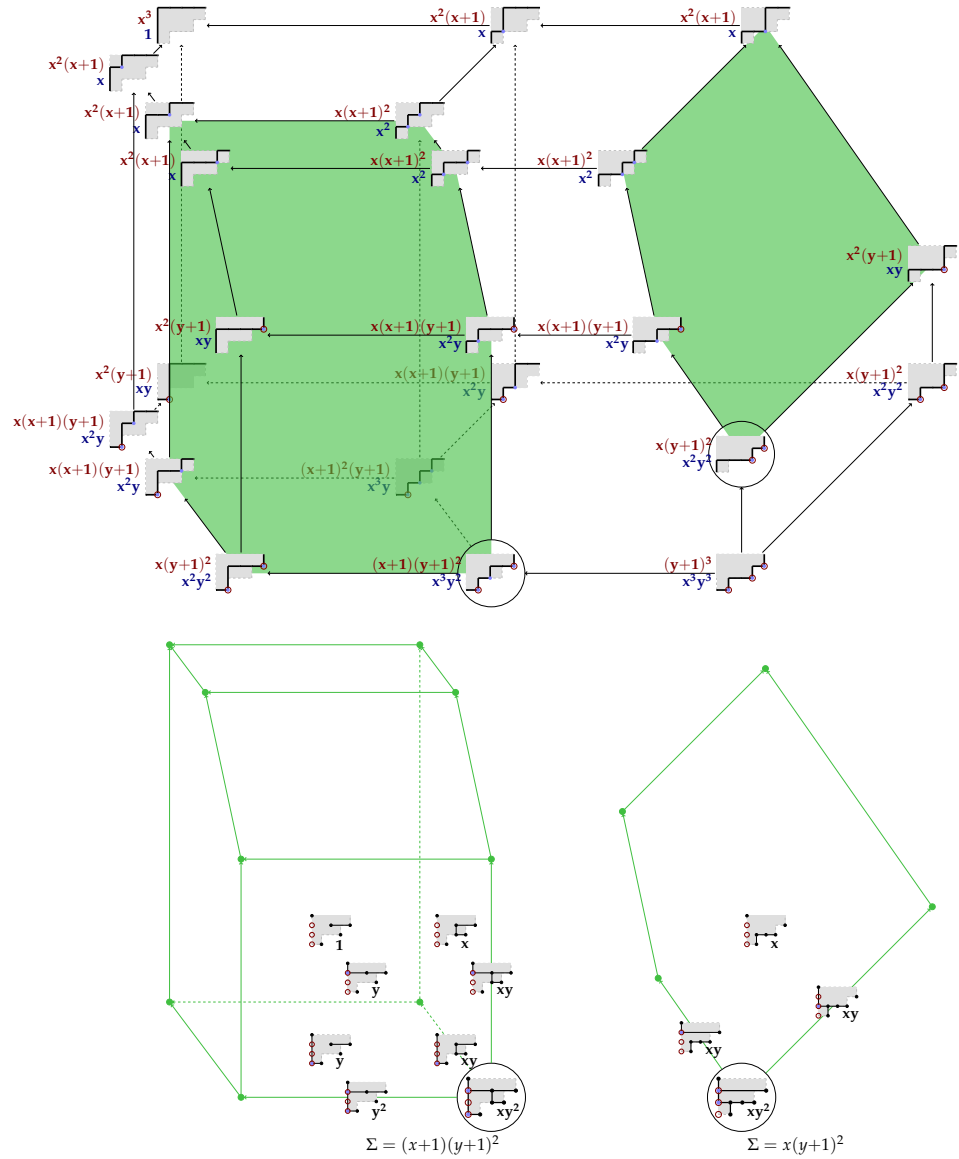}
	\caption{The $\nu$-associahedron for $\nu=ENEENEN$, where the vertices are labeled by $\nu$-paths together with the terms they contribute to $F_{\nu}$ (top expression in red) and $H_{\nu}$ (bottom expression in blue).  The green faces are additionally labeled with the faces in $\Cov_{T}(\nu)$, where $T$ is each time the image under the right-flushing bijection of the associated bottom $\nu$-path (circled).  The sum over the expressions contributed by these faces equals the term that is contributed by the path; illustrating Proposition~\ref{prop_FtriangleT}.}
	\label{fig:f_triangle_explanation}
\end{figure}

\section{Revisiting the $\nu$-Tamari complex}
	\label{sec:nu_tamari_reciprocity}
In this section we outline that the specialization at $y=x$ of \eqref{eq:h_to_f} can be seen as a key component of a certain reciprocity result.  We outline here the situation only for the $\nu$-Tamari complex, and refer the reader to \cite{ceballos21revisiting} for an explanation of this ``$f{=}h$ reciprocity'' for a more general class of simplicial complexes and a discussion of its relation to various generalizations of the Dehn--Sommerville relations.

Consider the $\nu$-Tamari complex $\TamariComplex({\nu})$ associated with a northeast path $\nu$ from $(0,0)$ to $(m,n)$. 
Recall that the facets of $\TamariComplex({\nu})$ are given by $\nu$-trees, and every $\nu$-tree contains $m+n+1$ elements. 
Therefore $\TamariComplex({\nu})$ is a pure simplicial complex of dimension
\begin{displaymath}
	d' -1 \defs \dim \bigl(\TamariComplex({\nu})\bigr) = m+n.
\end{displaymath}
Since it can be realized as a triangulation of a polytope~\cite{ceballos19geometry}, it is actually a $(d'{-}1)$-ball. 

If $\mathcal{C}$ is a polytopal complex, its \defn{$f$-vector} consists of the numbers $f_{i}(\mathcal{C})$ counting the faces of dimension $i$.  If $\mathcal{C}$ has dimension\footnote{The dimension of a polytopal complex $\mathcal{C}$ is defined as the largest dimension of the polytopes in $\mathcal{C}$.} $d$, then we may define its \defn{$h$-vector} to consist of the numbers $h_{i}(\mathcal{C})$ defined through the following base change
\begin{displaymath}
	\sum_{i=0}^{d}h_{i}(\mathcal{C})x^{i} = \sum_{i=0}^{d}f_{i-1}(\mathcal{C})x^{i}(1-x)^{d-i}.
\end{displaymath}
Moreover, we consider the following polynomials associated with these vectors:
\begin{align*}\label{ftilde_htilde_polynomial}
	f_{\mathcal{C}}(x) \defs \sum_{i=0}^d f_{i-1}(\mathcal{C})x^{d-i}, &&
	\tilde f_{\mathcal{C}}(x) \defs \sum_{i=0}^d f_{i-1}(\mathcal{C})x^{i},\\	
	h_{\mathcal{C}}(x) \defs \sum_{i=0}^d h_{i}(\mathcal{C})x^{d-i}, &&
	\tilde h_{\mathcal{C}}(x) \defs \sum_{i=0}^d h_{i}(\mathcal{C})x^{i}.
\end{align*}

As we can see from the definition of the $F$-triangle in \eqref{eq:f_triangle}, the polynomial $F_{\nu}(x,y)$ is a bivariate generating function of the faces of $\Asso(\nu)$. Evaluating this polynomial at $y=x$ recovers the face numbers of the $\nu$-associahedron.

\begin{proposition}\label{prop_F_f_nu_associahedron}
	The $\nu$-associahedron $\Asso(\nu)$ is a polytopal complex of dimension $d=\deg(\nu)$, and we have
	\begin{displaymath}
		F_{\nu}(x,x) = f_{\Asso(\nu)}(x).
	\end{displaymath}
\end{proposition}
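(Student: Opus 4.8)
The plan is to observe that the statement follows directly from the definition of the $F$-triangle once two facts are recalled: that $\Cov(\nu)$ is exactly the set of faces of the polytopal complex $\Asso(\nu)$, and that the monomial contributed to $F_\nu(x,y)$ by a face $C$ has total degree $\corelevant(C)+\relevantascent(C)=\codim(C)$, as already noted after \eqref{eq:f_triangle} and by \eqref{eq_corel}. Specializing $y=x$ should therefore collapse the bivariate refinement into the univariate codimension generating function, which is precisely the $f$-polynomial.

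First I would settle the dimensional claim. That $\Asso(\nu)$ is a polytopal complex is its definition as the dual of the triangulation of \cite{ceballos19geometry}. To compute its dimension I would combine Lemma~\ref{lem_asso_characterization_faces} with Lemma~\ref{lem_changingstatistics}\eqref{lem_changingstatistics_one}: by the former, every face has the form $T\setminus A$ with $T\in\Trees_\nu$ and $A\subseteq\Ascent(T)$, and $\dim(T\setminus A)=\lvert A\rvert$, so
\[
	\dim\bigl(\Asso(\nu)\bigr)=\max_{T\in\Trees_\nu}\ascent(T).
\]
By the latter together with the bijectivity of the right flushing map $\Phi$, we have $\ascent(T)=\valley\bigl(\Phi^{-1}(T)\bigr)$ and $\Phi^{-1}$ sweeps out all of $\Dyck_\nu$, so the right-hand side equals $\max\{\valley(\mu)\mid\mu\in\Dyck_\nu\}=\deg(\nu)$, which we denote by $d$.

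With the dimension in hand, the identity is a one-line computation. Setting $y=x$ in \eqref{eq:f_triangle} and using $\corelevant(C)+\relevantascent(C)=\codim(C)=d-\dim(C)$ yields
\[
	F_\nu(x,x)=\sum_{C\in\Cov(\nu)}x^{\codim(C)}=\sum_{C\in\Cov(\nu)}x^{d-\dim(C)}.
\]
Grouping the faces by their dimension and writing $f_k\bigl(\Asso(\nu)\bigr)$ for the number of $k$-dimensional faces turns this into $\sum_{k=0}^{d}f_k\bigl(\Asso(\nu)\bigr)x^{d-k}$, which I would then match against the defining formula for $f_{\Asso(\nu)}(x)$. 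There is no genuine obstacle in this argument; the only point requiring care is lining up the indexing convention, namely the shift $f_{i-1}$ versus $f_i$ and the verification that the empty face contributes nothing (no covering $\nu$-face is empty), so that no spurious top-degree term appears and the reindexed sum agrees with $f_{\Asso(\nu)}(x)$ on the nose.
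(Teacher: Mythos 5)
Your proof is correct and follows essentially the same route as the paper's: the dimension claim is obtained from Lemma~\ref{lem_asso_characterization_faces} together with Lemma~\ref{lem_changingstatistics}~\eqref{lem_changingstatistics_one}, and the identity by setting $y=x$ in \eqref{eq:f_triangle} and using $\corelevant(C)+\relevantascent(C)=\codim(C)=\deg(\nu)-\dim(C)$. The only difference is that you spell out the final reindexing against the definition of $f_{\Asso(\nu)}(x)$, including the observation that no covering $\nu$-face is empty so no spurious top-degree term arises --- a step the paper compresses into ``the result follows.''
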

\begin{proof}
	We start by proving that $\dim\bigl(\Asso(\nu)\bigr)= \deg(\nu)$. Let $\nu$ be a northeast path from $(0,0)$ to~$(m,n)$. The cells of $\Asso(\nu)$ are covering $\nu$-faces C ordered by reversed inclusion, whose dimensions are $\dim(C) = m+n+1 - \lvert C\rvert$.
	The dimension of a $\nu$-tree $T$ is equal to zero (because it contains $m+n+1$ elements). Any other cell is of the form $C=T\setminus A$ for some $\nu$-tree $T$ and a subset $A$ of ascents of $T$ (Lemma~\ref{lem_asso_characterization_faces}), in which case $\dim (C) = |A|$.
	Recall that $\deg(\nu)$ is the maximal number of valleys a $\nu$-path can have. By Lemma~\ref{lem_changingstatistics}(\ref{lem_changingstatistics_one}), this is also equal to the maximal number of ascents a $\nu$-tree can have, and therefore is the largest dimension of a cell of $\Asso(\nu)$. Thus, the dimension of $\Asso(\nu)$ is equal to $\deg(\nu)$.

	For the second part of the result, we have that
	\begin{align*}
		F_{\nu}(x,x) &= \sum_{C\in \Cov(\nu)} x^{\corelevant(C)+\relevantascent(C)} = \sum_{C\in \Cov(\nu)} x^{\deg(\nu)-\dim(C)}.
	\end{align*}
	Each cell of dimension $i$ contributes a term $x^{\deg(\nu)-i}$ to this sum, and the result follows. 
\end{proof}

\begin{example}
	Continuing Example~\ref{ex_3dAsso} for $\nu=ENEENEN$, we get 
	\begin{displaymath}
		F_{ENEENEN}(x,x) = 23x^3+39x^2+ 20x+3.
	\end{displaymath}
	The coefficients of this polynomial count the number of faces of the $\nu$-associahedron in Figure~\ref{fig:eneenen_associahedron}:
	it has $23$ vertices, $39$ edges, $20$ two-dimensional faces, and $3$ three-dimensional faces.
\end{example}

Using a shelling order of the $\nu$-Tamari complex $\TamariComplex(\nu)$,
it was proven in~\cite{ceballos19geometry}*{Theorem~4.6} that the $h$-vector $(h'_0,h'_1,\ldots)$ of $\TamariComplex(\nu)$ satisfies that $h'_i$ is equal to the number of $\nu$-paths with exactly $i$ valleys (the $h'_{i}$'s are known as the \defn{$\nu$-Narayana numbers}). As a consequence, the $H$-triangle $H_\nu(x,y)$ is a bivariate polynomial generalization of the $\tilde h$-polynomial of $\TamariComplex(\nu)$. 

\begin{proposition}\label{prop_H_hTamariComplex}
	Let $(h'_0,h'_1,\ldots)$ be the $h$-vector of the $\nu$-Tamari complex $\TamariComplex(\nu)$. Then $h'_i>0$ for $0\leq i \leq \deg(\nu)$ and $h'_i=0$ otherwise. Moreover,
	\begin{equation}\label{eq_h_vector_TamariComplex}
		H_{\nu}(x,1) = \tilde h_{\TamariComplex(\nu)}(x).
	\end{equation}
\end{proposition}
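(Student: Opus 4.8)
The plan is to base everything on the result quoted immediately before the proposition, \cite{ceballos19geometry}*{Theorem~4.6}, which identifies $h'_i$ with the number of $\nu$-paths having exactly $i$ valleys. Granting this, the identity \eqref{eq_h_vector_TamariComplex} is a purely formal comparison of generating functions: by definition $\tilde h_{\TamariComplex(\nu)}(x)=\sum_{i}h'_i x^{i}$, and substituting $h'_i=\bigl\lvert\{\mu\in\Dyck_\nu : \valley(\mu)=i\}\bigr\rvert$ turns the right-hand side into $\sum_{\mu\in\Dyck_\nu}x^{\valley(\mu)}$, which is $H_\nu(x,1)$ by \eqref{eq:h_triangle}. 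This step needs no input beyond the cited theorem.

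The remaining work is the statement about the support of $(h'_i)$, which I would argue entirely on the path side. That $h'_i=0$ for $i>\deg(\nu)$ is immediate from $\deg(\nu)=\max\{\valley(\mu):\mu\in\Dyck_\nu\}$, and $h'_i=0$ for $i<0$ is vacuous. The real content is strict positivity for $0\le i\le\deg(\nu)$: I must produce, for each such $i$, at least one $\nu$-path with exactly $i$ valleys. Since some path $\mu^{\ast}$ attains $\valley(\mu^{\ast})=\deg(\nu)$ by definition of the degree, it suffices to show that the set of valley-numbers realized by $\nu$-paths is an initial segment of $\mathbb{N}$; that is, closed under decreasing by one.

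To this end I would introduce a local valley-removal move turning any $\nu$-path with $k\ge 1$ valleys into one with exactly $k-1$ valleys. Fix a valley $p=(i,j)$, let $a$ be the length of the maximal east-run ending at $p$ and $b$ the length of the maximal north-run starting at $p$, and replace the corner $E^{a}N^{b}$ by $N^{b}E^{a}$. Geometrically this pushes the corner up and to the left, so the resulting path lies weakly above $\mu$ (hence weakly above $\nu$) and stays inside the bounding rectangle, since the new extreme point $(i-a,\,j+b)$ satisfies $i-a\ge 0$ and $j+b\le n$; thus it is again a $\nu$-path. Iterating the move starting from $\mu^{\ast}$ then realizes every value $\deg(\nu),\deg(\nu)-1,\dots,0$, giving $h'_i>0$ throughout the asserted range.

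The step I expect to require the most care, and the only genuine obstacle, is checking that this move lowers the valley count by exactly one: one has to verify, using the maximality of the runs $E^{a}$ and $N^{b}$, that the valley at $p$ disappears while none of the three corners $(i-a,j)$, $(i,j+b)$, $(i-a,j+b)$ of the altered region becomes a new valley, and to handle the degenerate cases in which a run abuts the boundary of $F_\nu$. Everything else reduces to the bookkeeping above and to the cited shelling result.
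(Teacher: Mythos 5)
Your proposal is correct, and on the two points the paper's proof actually addresses it takes the same route: both you and the paper reduce everything to \cite{ceballos19geometry}*{Theorem~4.6}, after which \eqref{eq_h_vector_TamariComplex} is the formal substitution of $h'_i=\bigl\lvert\{\mu\in\Dyck_{\nu} \mid \valley(\mu)=i\}\bigr\rvert$ into $\tilde h_{\TamariComplex(\nu)}(x)=\sum_i h'_i x^i$, and the vanishing $h'_i=0$ for $i>\deg(\nu)$ is immediate from $\deg(\nu)=\max\bigl\{\valley(\mu)\mid\mu\in\Dyck_{\nu}\bigr\}$. The genuine difference is the positivity claim $h'_i>0$ for $0\le i\le\deg(\nu)$: the paper's proof is silent on it (the statement asserts it, but the proof only argues the identity and the vanishing), whereas you actually prove it by showing the set of realizable valley numbers is an initial segment of $\mathbb{N}$. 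Your valley-removal move does go through, including the step you flag as delicate: writing $\mu=\alpha E^{a}N^{b}\beta$ with the runs maximal forces $\alpha$ to end in a north step (or be empty) and $\beta$ to begin with an east step (or be empty), so after the swap the three corners of the altered rectangle have step patterns $NN$, $NE$, $EE$ respectively, none of which is a valley, while valleys interior to $\alpha$ and $\beta$ are unaffected; the new path is weakly above $\mu$, hence above $\nu$, and being a monotone path between the same endpoints it stays in $F_{\nu}$. So the count drops by exactly one, the degenerate cases ($\alpha$ or $\beta$ empty) causing no trouble. In short, your argument has the same skeleton as the paper's but is strictly more complete: it fills the one gap the paper leaves implicit, at the cost of a short combinatorial lemma the paper evidently regarded as obvious.
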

\begin{proof}
	By definition, $H_{\nu}(x,1) = \sum_{\mu\in\Dyck_{\nu}}x^{\valley(\mu)}$.
	Each $\nu$-path with exactly $i$ valleys contributes a term $x^i$ to this sum. Since there are $h'_i$ $\nu$-paths with $i$ valleys~\cite{ceballos19geometry}*{Theorem~4.6}, Equation~\eqref{eq_h_vector_TamariComplex} follows. Since the maximum number of valleys of a $\nu$-path is $\deg(\nu)$ we get $h'_i=0$ whenever $i>\deg(\nu)$. 
\end{proof}

\begin{remark}
	We remark that the dimension of the $\nu$-Tamari complex $\TamariComplex(\nu)$ is one less than the number of elements of a $\nu$-tree. This number is much larger than the dimension of the $\nu$-associahedron $\Asso(\nu)$. For instance, for $\nu=EENEN$ (Figure~\ref{fig:eenen_face}) we have $\dim\bigl(\TamariComplex(\nu)\bigr)=6-1=5$ while $\dim\bigl(\Asso(\nu)\bigr)=2$. For $\nu=ENEENEN$ (Figure~\ref{fig:f_triangle_explanation}), we have $\dim\bigl(\TamariComplex(\nu)\bigr)=8-1=7$ while $\dim\bigl(\Asso(\nu)\bigr)=3$. 

	Although the $\nu$-Tamari complex is a ``more complicated" object due to its high dimension, its sub-complex of interior faces (which is dual to the $\nu$-associahedron) is simpler and retains a lot of information about its structure. For instance, the $F{=}H$ correspondence (Theorem~\ref{thm:fh_correspondence}) together with the results in this section tell us that we can recover the $h$-vector of $\TamariComplex(\nu)$ in terms of the $f$-vector of $\Asso(\nu)$. This implies, in particular, that we can obtain the $f$-vector of $\TamariComplex(\nu)$ in terms of the $f$-vector of $\Asso(\nu)$ as well. 
\end{remark}

The $\nu$-associahedron $\Asso(\nu)$ is defined as the dual complex of interior faces of~$\TamariComplex({\nu})$ under reverse inclusion~\cite{ceballos19geometry}.  If we denote the number of such interior faces of dimension $i$ by $f_{i}^{\interior}\bigl(\TamariComplex(\nu)\bigr)$, we get the following correspondence:
\begin{align}\label{eq_facenumbers_duality}
	f_{i-1}^\interior\bigl(\TamariComplex({\nu})\bigr) = f_{d'-i} \bigl(\Asso(\nu)\bigr).
\end{align}
Using Theorem~\ref{thm:fh_correspondence} specialized at $y=x$, we obtain
\begin{align*}
	x^{d'}\tilde{h}_{\TamariComplex(\nu)}\left(\frac{x+1}{x}\right) & = x^{d'}H_{\nu}\left(\frac{x+1}{x},1\right) & \text{(by Proposition~\ref{prop_H_hTamariComplex})}\\
	& = x^{d'-\deg(\nu)}F_{\nu}(x,x) & \text{(by Theorem~\ref{thm:fh_correspondence})}\\
	& = x^{d'-\deg(\nu)}\sum_{j=0}^{\deg(\nu)}f_{j}\bigl(\Asso(\nu)\bigr)x^{\deg(\nu)-j} & \text{(by Proposition~\ref{prop_F_f_nu_associahedron})}\\
	& = \sum_{j=0}^{\deg(\nu)}f_{j}\bigl(\Asso(\nu)\bigr)x^{d'-j} \\
	& = \sum_{j=0}^{d'-1}f_{j}\bigl(\Asso(\nu)\bigr)x^{d'-j}
\end{align*}
The last equation follows from the fact that $\dim\bigl(\Asso(\nu)\bigr)=\deg(\nu)$ and we have $f_{j}\bigl(\Asso(\nu)\bigr)=0$ for $j>\deg(\nu)$.  By reversing the order of summation, we can rewrite the last equation as follows:
\begin{align*}
	x^{d'}\tilde{h}_{\TamariComplex(\nu)}\left(\frac{x+1}{x}\right) & =\sum_{i=1}^{d'}f_{d'-i}\bigl(\Asso(\nu)\bigr)x^{i}\\
	& = \sum_{i=1}^{d'}f_{i-1}^{\interior}\bigl(\TamariComplex(\nu)\bigr)x^{i} & \text{(by Equation~\eqref{eq_facenumbers_duality})}\\
	& = \tilde f_{\TamariComplex(\nu)}^{\interior}(x),
\end{align*}
where the last equality follows because the empty face is (by definition) not interior.  We call the equality 
\begin{equation}\label{eq:tamaricomplex_fh_reciprocity}
	x^{d'}\tilde{h}_{\TamariComplex(\nu)}\left(\frac{x+1}{x}\right) = \tilde f_{\TamariComplex(\nu)}^{\interior}(x)
\end{equation}
the ``$f{=}h$ reciprocitiy for $\TamariComplex(\nu)$''.  
This relation says that the evaluation of the $\widetilde h$-polynomial of the $\nu$-Tamari complex on the left side counts the number of interior faces of the complex.  
In \cite{ceballos21revisiting},  we prove a generalization of this result for a large class of simplicial complexes which we call \defn{reciprocal complexes}, and use it to revisit several generalizations of the Dehn--Sommerville relations.

\section{A multivariate $F{=}H$-correspondence for finite posets}
	\label{sec:fh_correspondence_posets}
If we have a closer look at the underlying mechanics of the $F{=}H$-correspondence for $\nu$-associahedra explained in Section~\ref{sec:main_proof}, then we notice that the key observation is the distinction between two types of edges in the $\nu$-associahedron.  Indeed, in Lemma~\ref{lem_asso_characterization_faces}, the faces of $\Asso(\nu)$ are identified with pairs $(T,A)$, where $T$ is a $\nu$-tree and $A$ is a subset of the ascent nodes of $T$.  In Proposition~\ref{prop_FtriangleT}, the contribution of such a pair $(T,A)$ to $F_{\nu}(x,y)$ is described by partitioning $A$ into relevant and non-relevant ascents.  By definition of the rotation order on trees in Section~\ref{sec:nu_tamari_trees}, every ascent node of $T$ corresponds to a unique $\nu$-tree $T'$ such that $T\lessdot_{\nu}T'$.

It is not too far a stretch to distinguish edges in $\Tamari(\nu)$ with respect to rotating by a relevant or a non-relevant ascent.  From this perspective, we may define analogues of $F$- and $H$-triangles for arbitrary (finite) posets in such a way that we retain the $F{=}H$-correspondence.  In fact, there is no need to restrict ourselves to only \emph{two} types of edges.

Let $\Poset=(P,\leq)$ be a finite partially ordered set (or \defn{poset}).  We write $p\lessdot q$ if $p<q$ and there does not exist $r\in P$ such that $p<r<q$.  Such relations are \defn{cover relations} (or edges) of $\Poset$, and we write 
\begin{displaymath}
	\Covers(\Poset)\defs\bigl\{(p,q)\in P\times P\mid p\lessdot q\bigr\}.
\end{displaymath}
For an integer $k>0$, a \defn{$k$-valued edge labeling} of $\Poset$ is a map $\lambda\colon\Covers(\Poset)\to\{0,1,\ldots,k\}$.  For $p\in P$, we define its set of \defn{successors} by
\begin{displaymath}
	\Succ(p) \defs \bigl\{p'\in P\mid p\lessdot p'\bigr\}.
\end{displaymath}

Given a finite poset $\Poset=(P,\leq)$ with $k$-valued edge labeling $\lambda$, we consider the following statistics where $i\in[k]$:
\begin{align*}
	\out(p) & \defs \bigl\lvert\Succ(p)\bigr\rvert,\\
	\mrk_{i}(p) & \defs \bigl\lvert\bigl\{p'\in \Succ(p)\mid \lambda(p,p')=i\bigr\}\bigr\rvert.
\end{align*}
Then, the \defn{$(k+1)$-variate $H$-triangle} of $\Poset$ (with respect to $\lambda$) is
\begin{displaymath}
	H_{\Poset,\lambda}(x,y_{1},\ldots,y_{k}) \defs \sum_{p\in P}x^{\out(p)}y_{1}^{\mrk_{1}(p)}\cdots y_{k}^{\mrk_{k}(p)}.
\end{displaymath}

The \defn{degree} of $\Poset$ is $\deg(\Poset)\defs\max\bigl\{\out(p)\mid p\in P\bigr\}$.  For $p\in P$, $S\subseteq\Succ(p)$ and $i\in[k]$, we define\begin{align*}
	\rem_{i}(p,S) & \defs \mrk_{i}(p) - \bigl\lvert\bigr\{s\in S\mid \lambda(p,s)=i\bigr\}\bigr\rvert,\\
	\corem(p,S) & \defs \deg(\Poset) - \lvert S\rvert - \rem_{1}(p,S) - \rem_{2}(p,S) - \cdots - \rem_{k}(p,S).
\end{align*}
More precisely, $\rem_{i}(p,S)$ counts the outgoing edges labeled by $i$ which are not selected (hence ``remain in $p$'') by $S$.

The \defn{$(k+1)$-variate $F$-triangle} of $\Poset$ (with respect to $\lambda$) is
\begin{displaymath}
	F_{\Poset,\lambda}(x,y_{1},\ldots,y_{k}) \defs \sum_{p\in P}\sum_{S\subseteq\Succ(p)}x^{\corem(p,S)}y_{1}^{\rem_{1}(p,S)}\cdots y_{k}^{\rem_{k}(p,S)}.
\end{displaymath}

\begin{theorem}\label{thm:fh_poset_correspondence}
	For every finite poset $\Poset$ and every $k$-valued edge labeling $\lambda$ of $\Poset$, we have
	\begin{equation}\label{eq:multi_fh}
		F_{\Poset,\lambda}(x,y_{1},\ldots,y_{k}) = x^{\deg(\Poset)}H_{\Poset,\lambda}\left(\frac{x+1}{x},\frac{y_{1}+1}{x+1},\ldots,\frac{y_{k}+1}{x+1}\right).
	\end{equation}
	Equivalently,
	\begin{multline}\label{eq:multi_hf}
		H_{\Poset,\lambda}(x,y_{1},\ldots,y_{k})\\
			= (x-1)^{\deg(\Poset)}F_{\Poset,\lambda}\left(\frac{1}{x-1},\frac{x(y_{1}{-}1){+}1}{x-1},\ldots,\frac{x(y_{k}{-}1){+}1}{x-1}\right).
	\end{multline}
\end{theorem}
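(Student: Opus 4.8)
The plan is to mirror the proof given in Section~\ref{sec:main_proof} for the $\nu$-associahedron, the key realization being that once the geometry is stripped away the identity becomes a purely formal consequence of grouping terms and applying the binomial theorem; no property of $\Poset$ beyond its cover relations and the labeling $\lambda$ is needed. First I would check that the two displayed identities \eqref{eq:multi_fh} and \eqref{eq:multi_hf} are equivalent, so that it suffices to prove \eqref{eq:multi_fh}. The substitution $(x,y_1,\ldots,y_k)\mapsto\bigl(\tfrac{x+1}{x},\tfrac{y_1+1}{x+1},\ldots,\tfrac{y_k+1}{x+1}\bigr)$ is invertible: from $u=\tfrac{x+1}{x}$ one gets $x=\tfrac{1}{u-1}$, and then $v_i=\tfrac{y_i+1}{x+1}$ yields $y_i=\tfrac{u(v_i-1)+1}{u-1}$; moreover the prefactor $x^{\deg(\Poset)}$ becomes $(x-1)^{\deg(\Poset)}$ because $x^{\deg(\Poset)}=(u-1)^{-\deg(\Poset)}$. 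Relabeling $u,v_i$ as $x,y_i$ produces exactly \eqref{eq:multi_hf}.

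Next I would group the outer sum defining $F_{\Poset,\lambda}$ by the base element, writing $F_{\Poset,\lambda}=\sum_{p\in P}F^p$ with $F^p\defs\sum_{S\subseteq\Succ(p)}x^{\corem(p,S)}y_1^{\rem_1(p,S)}\cdots y_k^{\rem_k(p,S)}$; this is the analogue of the decomposition $\Cov(\nu)=\biguplus_T\Cov_T(\nu)$. The heart of the argument is a closed form for $F^p$ analogous to Proposition~\ref{prop_FtriangleT}. Writing $\mrk_0(p)$ for the number of outgoing edges of $p$ with label $0$, so that $\out(p)=\sum_{i=0}^k\mrk_i(p)$, I would split each $S\subseteq\Succ(p)$ as a disjoint union $S=S_0\uplus S_1\uplus\cdots\uplus S_k$ according to the label of each selected successor. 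The choices of the $S_i$ are independent, one has $\rem_i(p,S)=\mrk_i(p)-\lvert S_i\rvert$ for $i\in[k]$, and a short computation simplifies the exponent of $x$ to $\corem(p,S)=\bigl(\deg(\Poset)-\out(p)\bigr)+\bigl(\mrk_0(p)-\lvert S_0\rvert\bigr)$. Summing the resulting product over all $S_i$ by the binomial theorem then gives
\[
	F^p = x^{\deg(\Poset)-\out(p)}\,(x+1)^{\mrk_0(p)}\prod_{i=1}^k (y_i+1)^{\mrk_i(p)},
\]
the label $0$ supplying the factor $(x+1)$ exactly as the non-relevant ascents did in the $\nu$-case, while each labeled class $i$ supplies the factor $(y_i+1)$.

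Finally I would expand the right-hand side of \eqref{eq:multi_fh} term by term. Substituting $x\mapsto\tfrac{x+1}{x}$ and $y_i\mapsto\tfrac{y_i+1}{x+1}$ into $x^{\deg(\Poset)}H_{\Poset,\lambda}$, the contribution of $p$ becomes $x^{\deg(\Poset)-\out(p)}(x+1)^{\out(p)-\sum_{i=1}^k\mrk_i(p)}\prod_{i=1}^k(y_i+1)^{\mrk_i(p)}$; since $\out(p)-\sum_{i=1}^k\mrk_i(p)=\mrk_0(p)$, this is exactly the formula for $F^p$ above, and summing over $p\in P$ proves \eqref{eq:multi_fh}, hence the theorem.

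The main obstacle is not any deep difficulty but getting the bookkeeping right, above all the simplification of $\corem(p,S)$ and the recognition that label $0$ plays a distinguished role: it is the analogue of a non-relevant cover and feeds the variable $x$ rather than one of the $y_i$. A secondary point worth verifying is that the substituted expression is genuinely a polynomial; this follows because $\mrk_i(p)\le\out(p)\le\deg(\Poset)$ for every $p$ and every $i$, so all exponents occurring above are nonnegative, exactly as in the discussion preceding Proposition~\ref{prop_Ftriangle_paths}.
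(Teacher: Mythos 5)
Your proposal is correct and follows essentially the same route as the paper's proof: the identical decomposition of each $S\subseteq\Succ(p)$ by edge label, the same simplification $\corem(p,S)=\deg(\Poset)-\out(p)+\mrk_{0}(p)-\lvert S_{0}\rvert$, and the same application of the binomial theorem, with the only cosmetic difference that you collapse the $F$-side into the closed form $F^{p}=x^{\deg(\Poset)-\out(p)}(x+1)^{\mrk_{0}(p)}\prod_{i=1}^{k}(y_{i}+1)^{\mrk_{i}(p)}$ and then match it against the expanded $H$-side, whereas the paper runs the identical chain of equalities in the opposite direction, starting from $x^{\deg(\Poset)}H_{\Poset,\lambda}$ and expanding toward $F_{\Poset,\lambda}$. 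Your explicit verification of the inverse substitution (which the paper dismisses as ``an appropriate substitution of variables'') and of polynomiality are welcome additions but do not change the substance.
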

\begin{proof}
	We prove \eqref{eq:multi_fh}, and consider
	\begin{align*}
		\mrk_{0}(p) & = \bigl\lvert\bigl\{p'\in\Succ(p)\mid \lambda(p,p')=0\bigr\}\bigr\rvert\\
		& = \out(p) - \mrk_{1}(p) - \cdots - \mrk_{k}(p).
	\end{align*}
	Moreover, for $i\in\{0,1,\ldots,k\}$ we write
	\begin{displaymath}
		S_{i}(p) \defs \bigl\{p'\in\Succ(p)\mid \lambda(p,p')=i\bigr\}.
	\end{displaymath}
	Then, $\mrk_{i}(p)=\bigl\lvert S_{i}(p)\bigr\rvert$, and $\Succ(p)=\biguplus_{i=0}^{k}S_{i}(p)$.  For $S\subseteq\Succ(p)$ the notation ``$S=A_{0}\uplus A_{1}\uplus\cdots\uplus A_{k}$'' is meant to describe the decomposition of $S$, where $A_{i}=S\cap S_{i}(p)$.  For such a decomposition, we have $\rem_{i}(p,S)=\mrk_{i}(p)-\lvert A_{i}\rvert$.  Moreover, we have 
	\begin{align*}
		\corem(p,S) & = \deg(\Poset) - \lvert S\rvert - \rem_{1}(p,S) - \rem_{2}(p,S) - \cdots - \rem_{k}(p,S)\\
		& = \deg(\Poset) - \lvert A_{0}\rvert - \lvert A_{1}\rvert - \cdots - \lvert A_{k}\rvert - \rem_{1}(p,S) - \cdots - \rem_{k}(p,S)\\
		& = \deg(\Poset) - \lvert A_{0}\rvert - \mrk_{1}(p) - \cdots - \mrk_{k}(p)\\
		& = \deg(\Poset) - \out(p) + \mrk_{0}(p) - \lvert A_{0}\rvert.
	\end{align*}
	We therefore obtain
	\begin{align*}
		x^{\deg(\Poset)}H_{\Poset,\lambda} & \left(\frac{x+1}{x},\frac{y_{1}+1}{x+1},\ldots,\frac{y_{k}+1}{x+1}\right)\\
		& = x^{\deg(\Poset)}\sum_{p\in P}\left(\frac{x+1}{x}\right)^{\out(p)}\prod_{i=1}^{k}\left(\frac{y_{i}+1}{x+1}\right)^{\mrk_{i}(p)}\\
		& = \sum_{p\in P}x^{\deg(\Poset)-\out(p)}(x+1)^{\mrk_{0}(p)}\prod_{i=1}^{k}(y_{i}+1)^{\mrk_{i}(p)}\\
		& = \sum_{p\in P}x^{\deg(\Poset)-\out(p)}\sum_{A_{0}\subseteq S_{0}(p)}x^{\mrk_{0}(p)-\lvert A_{0}\rvert}\prod_{i=1}^{k}\sum_{A_{i}\subseteq S_{i}(p)}y_{i}^{\mrk_{i}(p)-\lvert A_{i}\rvert}\\
		& = \sum_{p\in P}\sum_{\substack{S\subseteq \Succ(p)\\S=A_{0}\uplus A_{1}\uplus\cdots\uplus A_{k}}}x^{\deg(\Poset)-\out(p)+\mrk_{0}(p)-\lvert A_{0}\rvert}\prod_{i=1}^{k}y_{i}^{\mrk_{i}(p)-\lvert A_{i}\rvert}\\
		& = \sum_{p\in P}\sum_{S\subseteq \Succ(p)}x^{\corem(p,S)}\prod_{i=1}^{k}y_{i}^{\rem_{i}(p,S)}\\
		& = F_{\Poset,\lambda}(x,y_{1},\ldots,y_{k}).
	\end{align*}

	Equation~\eqref{eq:multi_hf} follows by an appropriate substitution of variables.
\end{proof}

\begin{figure}
	\centering
	\includegraphics[scale=1,page=18]{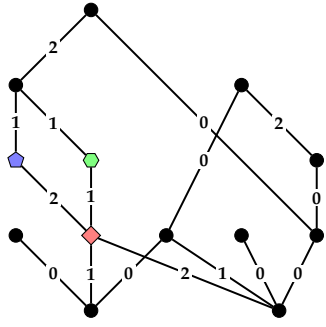}
	\caption{A poset with a $2$-valued edge labeling.}
	\label{fig:fh_poset}
\end{figure}

\begin{example}
	Let us consider the poset $\Poset$ with its $2$-valued labeling $\lambda$ shown in Figure~\ref{fig:fh_poset}.  We have $\deg(\Poset)=4$, because the minimal element on the right has four upper covers and no element has more.  If $p$ is the element displayed as a red lozenge, then we have $\out(p)=2$, $\mrk_{1}(p)=1$ and $\mrk_{2}(p)=1$.  Thus, $p$ contributes a term $x^{2}y_{1}y_{2}$ to $H_{\Poset,\lambda}(x,y_{1},y_{2})$.  Summing over all poset elements yields
	\begin{displaymath}
		H_{\Poset,\lambda}(x,y_{1},y_{2}) = x^{4}y_{1}y_{2} + x^{3}y_{1} + x^{2}y_{1}y_{2} + x^{2} + 2xy_{1} + 2xy_{2} + x + 4.
	\end{displaymath}
	The set $\Succ(p)$ consists of the two elements displayed as a blue pentagon and a green hexagon, respectively. If $S=\{q\}\subseteq\Succ(p)$, where $q$ is the blue pentagon, then we have $\lambda(p,q)=2$, and therefore $\rem_{1}(p,S)=\mrk_{1}(p)-0=1$ and $\rem_{2}(p,S)=\mrk_{2}(p)-1=0$.  We get $\corem(p,S)=4-1-1-0=2$.  Thus, the pair $(p,S)$ contributes the term $x^{2}y_{1}$ to $F_{\Poset,\lambda}(x,y_{1},y_{2})$.  Summing over all such pairs yields
	\begin{multline*}
		F_{\Poset,\lambda}(x,y_{1},y_{2}) = 6x^{4} + 3x^{3}y_{1} + 2x^{3}y_{2} + 2x^{2}y_{1}y_{2} + 8x^{3} + 4x^{2}y_{1} \\
			+ 2x^{2}y_{2} + 2xy_{1}y_{2} + 5x^{2} + 3xy_{1} + 2xy_{2} + y_{1}y_{2} + 3x + y_{1} + y_{2} + 1.
	\end{multline*}
	The reader is invited to verify that \eqref{eq:multi_fh} and \eqref{eq:multi_hf} are satisfied by these two polynomials.
\end{example}

The multivariate $F{=}H$-correspondence in \eqref{eq:multi_hf} was perhaps first stated in \cite{garver20chapoton}*{Theorem~5.4} for so-called Grid--Tamari lattices.  The definition of their $F$- and $H$-triangle depends on two simplicial complexes associated with the Grid--Tamari lattices.
We have not checked the details, but we are fairly confident about the existence of an appropriate edge-labeling of the Grid--Tamari lattices that recovers their $F$- and $H$-triangles in our setting.

\begin{remark}\label{rem:nu_tamari_marking}
	Let $\Poset=\Tamari(\nu)$, $k=1$ and $\lambda(T,T')=1$ if and only if $T'$ is obtained from $T$ by rotating at a relevant node.  We call this the \defn{rotation marking}.  Then, for every $\nu$-tree $T$ and every $A\subseteq\Ascent(T)$, we get $\rem_{1}(T,A)=\relevantascent(C)$ for $C=T\setminus A$ and $\corem(T,A)=\corelevant(C)$.  It follows right away that $F_{\Poset,\lambda}(x,y)=F_{\nu}(x,y)$ and $H_{\Poset,\lambda}(x,y)=H_{\nu}(x,y)$.
	Therefore, Theorem~\ref{thm:fh_correspondence} is a special case of Theorem~\ref{thm:fh_poset_correspondence}.
\end{remark}

\begin{remark}
    Remark~\ref{rem:nu_tamari_marking} also indicates how to obtain a multivariate version of Theorem~\ref{thm:fh_correspondence} for the $\nu$-Tamari lattices.  Suppose that $\nu$ has $k$ valleys, say in rows $i_{1},i_{2},\ldots,i_{k}$.  Then, let $\lambda(T,T')=j$ whenever $T'$ is obtained from $T$ by (i) rotating at a relevant node in row $i_{j}$ (when $j>0$) or (ii) rotating at a non-relevant node (when $j=0$).  For every $\nu$-tree $T$ and every $A\subseteq\Ascent(T)$, the statistic $\rem_{j}(T,A)$ records whether $C=T\setminus A$ contains the relevant node in row $i_{j}$.
\end{remark}

The perspective offered by Theorem~\ref{thm:fh_poset_correspondence} raises interesting questions.  On the one hand, Chapoton's original definition of the $F$- and $H$-triangles in \cites{chapoton06sur} is in the context of cluster complexes and root posets of crystallographic Coxeter groups.  Specializing our Theorem~\ref{thm:fh_correspondence} to the staircase path $\nu=(EN)^{n}$ recovers and explains Chapoton's construction in type $A_{n}$ with the classical Tamari lattice playing a key role.  The Tamari lattice generalizes to the other crystallographic Coxeter groups as a \defn{Cambrian lattice}, see \cites{reading06cambrian,reading07sortable}.  We pose the following research challenge, where we refer to \cite{reading07sortable} for any undefined notation.

\begin{question}\label{qu:cambrian_marking}
	Let $W$ be a finite, irreducible, crystallographic Coxeter group and let $c\in W$ be a Coxeter element.  Let $\mathbf{C}$ denote the corresponding Cambrian lattice.  Find and explain a $1$-valued edge labeling of $\mathbf{C}$ such that the corresponding $2$-variate $F$- and $H$-triangles recover Chapoton's original $F$- and $H$-triangles associated with~$W$.
\end{question}

\begin{figure}
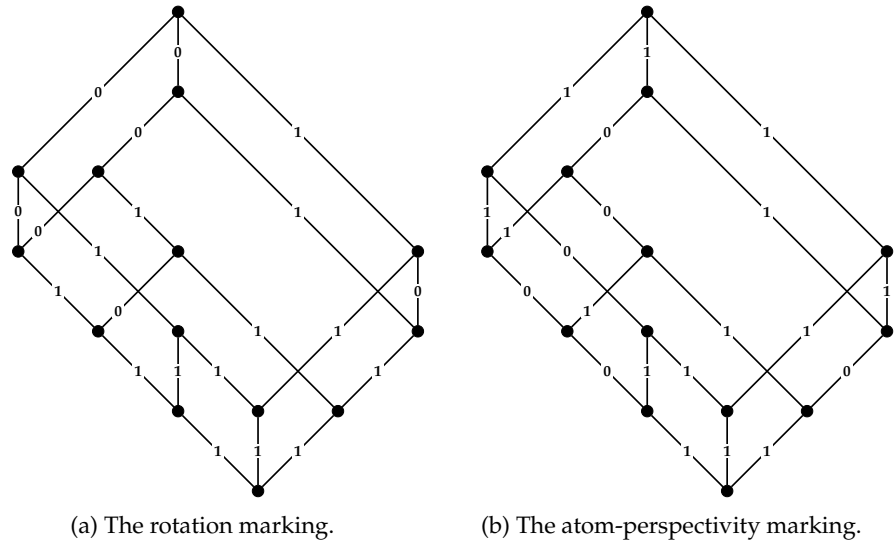

	\centering
	\begin{subfigure}[t]{.4\textwidth}
		\centering
		\includegraphics[scale=.85,page=20]{3d_associahedron.pdf}
		\caption{The rotation marking.}
		\label{fig:cambrian_1}
	\end{subfigure}
	\hspace*{1cm}
	\begin{subfigure}[t]{.4\textwidth}
		\centering
		\includegraphics[scale=.85,page=21]{3d_associahedron.pdf}
		\caption{The atom-perspectivity marking.}
		\label{fig:cambrian_2}
	\end{subfigure}
	\caption{Two different markings of the Tamari lattice.}
\end{figure}

In particular, different Cambrian lattices associated with $W$ orient the \emph{same} polytope, the \defn{$W$-associahedron}.  Therefore, the associated $F$-triangle, enumerating the faces of the $W$-associahedron is the same for all Cambrian lattices of $W$.  This implies that we want markings of different, not necessarily isomorphic, Cambrian lattices to produce the \emph{same} $H$-triangle.

Let us illustrate Question~\ref{qu:cambrian_marking} with an example.  Figure~\ref{fig:cambrian_1} shows a lattice isomorphic to the $ENENEN$-Tamari lattice.  This is itself a Cambrian lattice associated with the Coxeter group $A_{3}$.  The marking displayed is the rotation-marking described in Remark~\ref{rem:nu_tamari_marking}.  We obtain the following $H$-triangle:
\begin{displaymath}
	H_{3}(x,y) = x^{3}y^{3} + 3x^{2}y^{2} + 2x^{2}y + x^{2} + 3xy + 2x + 1.
\end{displaymath}
Another, reasonably natural marking of the Cambrian lattices can be defined as follows.  If $\Lattice$ is a finite lattice, then two cover relations $(a_{1},b_{1})$ and $(a_{2},b_{2})$ are \defn{perspective} if either $a_{1}\vee b_{2}=b_{1}$ and $a_{1}\wedge b_{2}=a_{2}$ or $b_{1}\vee a_{2}=b_{2}$ and $b_{1}\wedge a_{2}=a_{1}$.  The \defn{atom-perspectivity marking} of $\Lattice$ marks $(a,b)\in\Covers(\Lattice)$ if and only if $(a,b)$ is perspective to some $(\hat{0},c)$, where $\hat{0}$ denotes the least element of $\Lattice$.  For the Tamari lattice, this marking is displayed in Figure~\ref{fig:cambrian_2}.  The associated $H$-triangle is
\begin{displaymath}
	\tilde{H}_{3}(x,y) = x^{3}y^{3} + 3x^{2}y^{2} + 3x^{2}y + 3xy + 3x + 1,
\end{displaymath}
which is different from $H_{3}(x,y)$.  This is also evident, because the rotation-marking marks $14$ cover relations, the atom-perspectivity marking marks $15$.  Figure~\ref{fig:cambrian_3} shows a non-Tamari Cambrian lattice associated with $A_{3}$ marked with the atom-perspectivity marking.  The reader is invited to check that this marking produces once again the correct $H$-triangle $H_{3}(x,y)$.  Computer experiments (up to $n=7$) suggest that in type $A$ this marking always produces the correct $H$-triangle precisely when $c$ is a \emph{bipartite} Coxeter element.

\begin{figure}
	\centering
	\includegraphics[scale=.85,page=22]{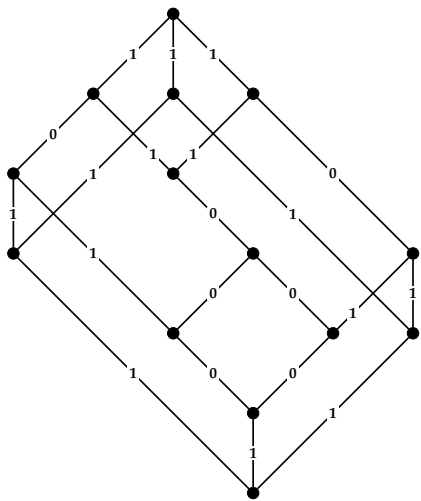}
	\caption{The atom-perspectivity marking of a non-Tamari Cambrian lattice of $A_3$.}
	\label{fig:cambrian_3}
\end{figure}

\begin{bibdiv}
\begin{biblist}

\normalsize

\bib{armstrong09generalized}{article}{
      author={Armstrong, Drew},
       title={Generalized noncrossing partitions and combinatorics of {C}oxeter groups},
        date={2009},
     journal={Memoirs of the American Mathematical Society},
      volume={202},
}

\bib{bell20schroder}{article}{
      author={von Bell, Matias},
      author={Yip, Martha},
       title={Schr{\"o}der combinatorics and $\nu$-associahedra},
        date={2021},
     journal={The European Journal of Combinatorics},
      volume={98},
       pages={Paper no. 103415, 18 pages},
}

\bib{ceballos_hopf_2018}{article}{
      author={Bergeron, Nantel},
      author={Ceballos, Cesar},
      author={Pilaud, Vincent},
       title={Hopf dreams and diagonal harmonics},
        date={2022},
     journal={To appear in the Journal of the London Mathematical Society},
}

\bib{ceballos20the}{article}{
      author={Ceballos, Cesar},
      author={Fang, Wenjie},
      author={M{\"u}hle, Henri},
       title={The {S}teep-{B}ounce zeta map in {P}arabolic {C}ataland},
        date={2020},
     journal={Journal of Combinatorial Theory (Series A)},
      volume={172},
       pages={Article 105210, 59 pages},
}

\bib{ceballos001}{article}{
      author={Ceballos, Cesar},
      author={Labb{\'e}, Jean-Philippe},
      author={Stump, Christian},
       title={Subword complexes, cluster complexes, and generalized multi-associahedra},
        date={2014},
     journal={Journal of Algebraic Combinatorics},
      volume={39},
      number={1},
       pages={17\ndash 51},
}
\bib{ceballos21revisiting}{article}{
      author={Ceballos, Cesar},
      author={M{\"u}hle, Henri},
       title={Revisiting generalizations of the {D}ehn--{S}ommerville relations},
        date={2021},
      eprint={arXiv:2108.13145},
}

\bib{ceballos19geometry}{article}{
      author={Ceballos, Cesar},
      author={Padrol, Arnau},
      author={Sarmiento, Camilo},
       title={Geometry of {$\nu$}-{T}amari lattices in types {$A$} and {$B$}},
        date={2019},
     journal={Transactions of the American Mathematical Society},
      volume={371},
       pages={2575\ndash 2622},
}

\bib{ceballos20nu}{article}{
      author={Ceballos, Cesar},
      author={Padrol, Arnau},
      author={Sarmiento, Camilo},
       title={The $\nu$-{T}amari lattice via $\nu$-trees, $\nu$-bracket vectors, and subword complexes},
        date={2020},
     journal={The Electronic Journal of Combinatorics},
      volume={27},
       pages={Research paper P1.14, 31 pages},
}

\bib{ceballos19sweakorder}{article}{
      author={Ceballos, Cesar},
      author={Pons, Viviane},
       title={The $s$-weak order and $s$-permutahedra},
        date={2019},
     journal={S{\'e}minaire Lotharingien de Combinatoire},
      volume={82B},
       pages={Conference paper 76, 12 pages},
        note={Proceedings of the 31st Conference on Formal Power Series and Algebraic Combinatorics},
}

\bib{CeballosSantosZiegler2015}{article}{
      author={Ceballos, Cesar},
      author={Santos, Francisco},
      author={Ziegler, G{\"u}nter~M.},
       title={Many non-equivalent realizations of the associahedron},
        date={2015},
     journal={Combinatorica},
      volume={35},
      number={5},
       pages={513\ndash 551},
}

\bib{chapoton06sur}{article}{
      author={Chapoton, Fr{\'e}d{\'e}ric},
       title={Sur le nombre de r{\'e}flexions pleines dans les groupes de {C}oxeter finis},
        date={2006},
     journal={Bulletin of the Belgian Mathematical Society},
      volume={13},
       pages={585\ndash 596},
}

\bib{chapoton_personalCommunication_2021}{misc}{
      author={Chapoton, Fr{\'e}d{\'e}ric},
        date={2021},
        note={Personal Communication},
}

\bib{FominZelevinsky-ClusterAlgebrasII}{article}{
      author={Fomin, Sergey},
      author={Zelevinsky, Andrei},
       title={Cluster algebras {II}. {F}inite type classification},
        date={2003},
     journal={Inventiones Mathematicae},
      volume={154},
      number={1},
       pages={63\ndash 121},
}

\bib{FZ03}{article}{
      author={Fomin, Sergey},
      author={Zelevinsky, Andrei},
       title={{$Y$-systems and generalized associahedra}},
        date={2003},
     journal={Annals of Mathematics. Second Series},
      volume={158},
       pages={977\ndash 1018},
}

\bib{garver20chapoton}{article}{
      author={Garver, Alexander},
      author={McConville, Thomas},
       title={Chapoton triangles for nonkissing complexes},
        date={2020},
     journal={Algebraic Combinatorics},
      volume={3},
       pages={1331\ndash 1363},
}

\bib{krattenthaler19the}{article}{
      author={Krattenthaler, Christian},
      author={M{\"u}hle, Henri},
       title={The rank enumeration of certain parabolic non-crossing partitions},
        date={2022},
     journal={To appear in Algebraic Combinatorics},
}

\bib{muehle19ballot}{article}{
      author={M{\"u}hle, Henri},
       title={Ballot-noncrossing partitions},
        date={2019},
     journal={S{\'e}minaire Lotharingien de Combinatoire},
      volume={82B},
       pages={Conference paper 7, 12 pages},
        note={Proceedings of the 31st Conference on Formal Power Series and Algebraic Combinatorics},
}

\bib{muehle20hochschild}{article}{
      author={M{\"u}hle, Henri},
       title={Hochschild lattices and shuffle lattices},
        date={2020},
      eprint={arXiv:2008.13247},
}

\bib{muehle21noncrossing}{article}{
      author={M{\"u}hle, Henri},
       title={Noncrossing arc diagrams, {T}amari lattices, and parabolic quotients of the symmetric group},
        date={2021},
     journal={Annals of Combinatorics},
      volume={25},
       pages={307\ndash 344},
}

\bib{TamariFestschrift}{book}{
      editor={M{\"u}ller-Hoissen, Folkert},
      editor={Pallo, Jean~Marcel},
      editor={Stasheff, Jim},
       title={{Associahedra, {T}amari Lattices and Related Structures. {T}amari
  {M}emorial {F}estschrift}},
      series={Progress in Mathematics},
   publisher={Birkh\"auser/Springer, Basel},
     address={New York},
        date={2012},
      volume={299},
}

\bib{preville17enumeration}{article}{
      author={Pr{\'e}ville-Ratelle, Louis-Fran{\c c}ois},
      author={Viennot, Xavier},
       title={An extension of {T}amari lattices},
        date={2017},
     journal={Transactions of the American Mathematical Society},
      volume={369},
       pages={5219\ndash 5239},
        note={Assigned the incorrect title ``The enumeration of generalized Tamari intervals'' by the journal},
}

\bib{reading06cambrian}{article}{
      author={Reading, Nathan},
       title={{Cambrian Lattices}},
        date={2006},
     journal={Advances in Mathematics},
      volume={205},
       pages={313\ndash 353},
}

\bib{reading07sortable}{article}{
      author={Reading, Nathan},
       title={{Sortable Elements and Cambrian Lattices}},
        date={2007},
     journal={Algebra Universalis},
      volume={56},
       pages={411\ndash 437},
}

\bib{thiel14on}{article}{
      author={Thiel, Marko},
       title={On the {$H$}-triangle of generalised nonnesting partitions},
        date={2014},
     journal={European Journal of Combinatorics},
      volume={39},
       pages={244\ndash 255},
}

\end{biblist}
\end{bibdiv}

\end{document}